\newtheorem{theorem}{Theorem}[section]
\newtheorem{lemma}[theorem]{Lemma}
\newtheorem{corollary}[theorem]{Corollary}
\theoremstyle{definition}
\newtheorem{definition}[theorem]{Definition}
\theoremstyle{remark}
\begin{document}

\title[ A class of Integral Operators]{A class of Integral Operators from Lebesgue spaces into Harmonic Bergman-Besov or Weighted Bloch Spaces}

\author{\"{O}mer Faruk Do\u{g}an}
\address{Department of Mathematics,  Tek$\dot{\hbox{\i}}$rda\u{g} Nam{\i}k Kemal University,
59030 Tek$\dot{\hbox{\i}}$rda\u{g}, Turkey}
\email{ofdogan@nku.edu.tr}

\subjclass[2010]{Primary 47B34, 47G10, Secondary 31B05,31B10,42B35,45P05}

\keywords{ Integral operator, Harmonic Bergman-Besov kernel, Harmonic Bergman-Besov space, Weighted harmonic Bloch space, Harmonic Bergman-Besov projection.}

\begin{abstract}
We consider a class of two-parameter weighted  integral operators induced by harmonic Bergman-Besov kernels on the unit ball of $\mathbb{R}^{n}$  and  characterize precisely those that are bounded from Lebesgue spaces $L^{p}_{\alpha}$ into Harmonic Bergman-Besov $b^{q}_{\beta}$ or weighted Bloch Spaces $b^{\infty}_{\beta} $, for  $1\leq p\leq\infty$, $1\leq q< \infty$ and $\alpha,\beta \in \mathbb{R}$.  These operators can be viewed as generalizations of the harmonic Bergman-Besov projections. Also, our results remove the disturbing conditions $\beta>-1$ when $q<\infty$ and $\beta\geq 0$ when $q=\infty$ of Do\u{g}an (A Class of Integral Operators Induced by Harmonic Bergman-Besov kernels on Lebesgue Classes, preprint, 2020) by mapping the operators into these spaces instead of the Lebesgue classes.
\end{abstract}

\date{\today}

\maketitle

\section{Introduction and Main Results}\label{section-Introduction}

Let $n\geq 2$ be an integer and $\mathbb{B}=\mathbb{B}_{n}$ be the open unit ball in $\mathbb{R}^n$. Let $\nu$  be the Lebesgue volume  measure on $\mathbb{B}$ normalized so that $\nu(\mathbb{B})=1$.
For  $\alpha\in \mathbb{R}$, we define the weighted volume measures $\nu_\alpha$ on $\mathbb{B}$ by
\[
d\nu_\alpha(x)=\frac{1}{V_\alpha} (1-|x|^2)^\alpha d\nu(x).
\]
These measures are finite when $\alpha>-1$ and in this case we choose $V_\alpha$ so that $\nu_\alpha(\mathbb{B})=1$. Naturally $V_0=1$.  For $\alpha\leq -1$, we set $V_\alpha=1$. We denote the Lebesgue classes with respect to $\nu_\alpha$ by $L^p_{\alpha}$,  $0<p<\infty$ and the corresponding norms by $\|\cdot\|_{L^p_{\alpha}}$.

Let $h(\mathbb{B})$ be the space of all complex-valued harmonic functions on $\mathbb{B}$ with the topology of uniform convergence on compact subsets. The space of bounded harmonic functions on $\mathbb{B}$ is denoted by $h^{\infty}$.
For $0<p<\infty$ and $\alpha>-1$, the weighted  harmonic Bergman space $b^p_\alpha$ is defined by $b^p_\alpha=  L^p_\alpha \cap h(\mathbb{B})$ endowed with the norm
$\|\cdot\|_{L^p_{\alpha}}$.  The subfamily $b^2_\alpha$ is a  reproducing kernel Hilbert space with respect to the inner product $[f,g]_{b^2_\alpha}=\int_{\mathbb{B}}f\overline{g} \, d\nu_{\alpha}(x)$ and with the reproducing kernel $R_\alpha(x,y)$  such that $f(x)=[f,R_\alpha(x,\cdot)]_{b^2_\alpha}$ for every $f\in b^2_\alpha$ and $x\in \mathbb{B}$. It is well-known that $R_\alpha$ is real-valued  and $R_\alpha(x,y)=R_\alpha(y,x)$. The homogeneous expansion of $R_\alpha(x,y)$ is given in the $\alpha>-1$ part of the formulas (\ref{Rq - Series expansion}) and (\ref{gamma k q-Definition}) below (see \cite{DS}, \cite{GKU2}).

For $\alpha>-1$, the orthogonal projection  $ Q_\alpha:L^2_\alpha \to b^2_\alpha$ is given by the integral operator
\begin{equation}\label{orthogonal projection}
  Q_\alpha f(x)= \frac{1}{V_\alpha} \int_{\mathbb{B}} R_\alpha(x,y) f(y) (1-|y|^2)^\alpha d\nu(y) \quad (f\in L^2_\alpha ).
\end{equation}
This integral operator plays a major role in the theory of weighted harmonic
Bergman spaces and the question when Bergman projection $ Q_\alpha:L^p_\beta \to b^p_\beta$  is bounded is studied in many sources such as 
(\cite[Theorem 3.1]{JP}, \cite[Theorem 2.5]{PE}, \cite[Theorem 3.1]{KS}). By
allowing  the exponents, the weights and  the parameters in the integrand  to be different, our goal  is
to determine exactly when the integral operator in (\ref{orthogonal projection}) is bounded from $L^p_\alpha$ to $b^q_\beta$.

Furthermore, we  also remove the restriction $\alpha > -1$. The weighted harmonic Bergman spaces $b^p_\alpha$ initially defined for $\alpha > -1$
and can be extended to the whole range $\alpha \in \mathbb{R}$. These are studied in detail in \cite{GKU2} and will be reviewed in Section \ref{section-Preliminaries}. We call the extended family $b^p_\alpha$ $(\alpha \in \mathbb{R}) $ as harmonic Bergman-Besov spaces and the corresponding reproducing kernels as $R_\alpha(x,y)$ $(\alpha \in \mathbb{R})$  harmonic Bergman-Besov kernels. The homogeneous expansion of  $R_\alpha(x,y)$ can be expressed in terms of zonal harmonics
\begin{equation}\label{Rq - Series expansion}
R_\alpha(x,y)=\sum_{k=0}^{\infty} \gamma_k(\alpha) Z_k(x,y) \quad  (\alpha\in \mathbb{R}, \, x,y\in \mathbb{B}),
\end{equation}
where (see \cite[Theorem 3.7]{GKU1}, \cite[Theorem 1.3]{GKU2})
\begin{equation}\label{gamma k q-Definition}
        \gamma_k(\alpha):= \begin{cases}
         \dfrac{(1+n/2+\alpha)_k}{(n/2)_k}, &\text{if $\, \alpha > -(1+n/2)$}; \\
         \noalign{\medskip}
         \dfrac{(k!)^2}{(1-(n/2+\alpha))_k (n/2)_k}, &\text{if $\, \alpha \leq -(1+n/2)$},
\end{cases}
\end{equation}
and $(a)_b$ is the Pochhammer symbol. For definition and details about $Z_k(x,y)$, see \cite[Chapter 5]{ABR}.

Finally, we allow the exponents $p, q$ to be $\infty$. We denote by $L^{\infty}=L^{\infty}(\nu)$  the Lebesgue class of all essentially bounded functions on $\mathbb{B}$ with respect to $\nu$. In this case we
have $L^{\infty}(d\nu_{\alpha})=L^{\infty}$ for every $\alpha \in \mathbb{R}$ and because of this we need to use a different
weighted class. For $\alpha\in \mathbb{R}$,  we define
\begin{equation*}
\mathcal{L}^{\infty}_\alpha := \{\varphi \,\, \text{is measurable on} \, \, \mathbb{B}: (1-|x|^2)^{\alpha} \varphi(x) \in L^{\infty} \},
\end{equation*}
so that $\mathcal{L}^{\infty}_0 = L^{\infty}$. The norm on $\mathcal{L}^{\infty}_\alpha$ is
\begin{equation*}
\|\varphi\|_{\mathcal{L}^{\infty}_\alpha} = \|(1-|x|^2)^\alpha \varphi(x)\|_{L^\infty}.
\end{equation*}
For $\alpha>0$, the weighted harmonic Bloch space $b_\alpha$ is defined by $h(\mathbb{B})\cap \mathcal{L}^\infty_\alpha$ and also can be extended to the whole range $\alpha \in \mathbb{R}$. The properties of this extended family are studied in detail in \cite{DU1} and will be reviewed in Section \ref{section-Preliminaries}.

We can now state our main result. For $b,c \in \mathbb{R}$  define the integral operator $T_{bc}$ by
\begin{equation}\label{Main Operator}
T_{bc}\, f(x) = \int_{\mathbb{B}} R_{c} (x,y) \, f(y) (1-|y|^2)^b d\nu(y).
\end{equation}
 Our aim is to determine exactly when $T_{bc}$ is bounded from $L^p_\alpha$ to $b^q_\beta$. The
result is divided into two cases depending on whether  $1\leq q< \infty$  or $q=\infty$ that describe its boundedness in terms of the six parameters $(b,c,\alpha,\beta,p,q)$ involved.

\begin{theorem}\label{Theorem-Boundedness of T 4}
Let  $b,c,\alpha,\beta \in \mathbb{R}$, $1\leq p\leq\infty$ and $1\leq q< \infty$.  Then
$T_{bc}$ is bounded from $L^p_\alpha$ to $b^q_\beta$ if and only if $(b,c,\alpha,\beta,p,q)$ satisfy one of the following conditions:
\begin{enumerate}
\item[(i)] $1<p\leq q<\infty$, $\alpha+1<p(b+1)$ and $c\leq b+\dfrac{n+\beta}{q}-\dfrac{n+\alpha}{p}$;
\item[(ii)] $1=p\leq q<\infty$, $\alpha<b$ and $c\leq b+\dfrac{n+\beta}{q}-(n+\alpha)$ or $\alpha\leq b$ and $c< b+\dfrac{n+\beta}{q}-(n+\alpha)$;
\item[(iii)] $1\leq q <p< \infty$, $\alpha+1<p(b+1)$ and $c< b+\dfrac{1+\beta}{q}-\dfrac{1+\alpha}{p}$;
 \item[(iv)] $1\leq q<p=\infty$, $\alpha-1<b$ and $c<b+\dfrac{\beta+1}{q}-\alpha$.
\end{enumerate}
\end{theorem}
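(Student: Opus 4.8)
The plan is to reduce the boundedness of $T_{bc}:L^p_\alpha\to b^q_\beta$ to two ingredients: a pointwise/integral estimate for the harmonic Bergman-Besov kernel $R_c(x,y)$, and the known Forelli-Rudin type estimates for integrals of the form $\int_{\mathbb B}(1-|y|^2)^s|x-y|^{-t}\,d\nu(y)$ together with Schur's test and its variants. Concretely, I would first recall (from the preliminary section that cites \cite{GKU2}, \cite{DU1}, \cite{ABR}) the size estimate $|R_c(x,y)|\lesssim |x-y|^{-(n+c)}$ when $n+c>0$, with the appropriate logarithmic or bounded behaviour in the borderline and negative cases, as well as the estimates for $D$-derivatives of $R_c$ needed to realize the $b^q_\beta$ and $b^\infty_\beta$ norms via a radial differential operator $D^m_t$ that maps these spaces isometrically (up to constants) onto weighted Bergman/Bloch spaces with a large parameter. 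The point of mapping into $b^q_\beta$ rather than $L^q_\beta$ is exactly that one is allowed to differentiate: for $\beta$ small (in particular $\beta\le -1$ when $q<\infty$) the norm on $b^q_\beta$ is computed after applying $D^m_t$ with $t$ chosen large, and $D^m_t R_c(x,y)$ behaves like $R_{c'}(x,y)$ for a shifted parameter $c'=c+m$, which lands one back in the classical range. This is the mechanism by which the "disturbing conditions" are removed.

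For the sufficiency direction I would split into the four parameter regimes of the statement. In case (i), $1<p\le q<\infty$: after reducing to estimating $\|D^m_t T_{bc}f\|_{L^q_{\beta'}}$ for a suitable large $\beta'$, I would apply the Forelli-Rudin / Okikiolu type operator bound for integral operators with kernel $|x-y|^{-(n+c')}$ between weighted $L^p$ and $L^q$ spaces (the $p\le q$ version, proved by Schur's test with test functions $(1-|x|^2)^{-\lambda}$, or by interpolation off the diagonal Hölder case); the exponent conditions $\alpha+1<p(b+1)$ and $c\le b+\frac{n+\beta}{q}-\frac{n+\alpha}{p}$ are exactly what make the relevant Schur exponents work and the weight powers integrable. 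Case (ii), $p=1$: here one uses Minkowski's integral inequality (the $L^1\to L^q$ bound is governed by a uniform-in-$y$ estimate of $\|R_{c'}(\cdot,y)(1-|y|^2)^b\|_{L^q_{\beta'}}$), which produces the sharp strict/non-strict dichotomy in the two sub-cases. Case (iii), $q<p$: the operator cannot be bounded by Schur alone; instead one tests against $L^{p'}_{\alpha}$-style functions and uses the condition $c<b+\frac{1+\beta}{q}-\frac{1+\alpha}{p}$ to get an integrable majorant via Hölder in the "$x$ then $y$" order, exploiting that the $q<p$ range forces a genuinely strict inequality. Case (iv), $p=\infty$: then $f\in L^\infty$, $T_{bc}f(x)=\int R_{c'}(x,y)(1-|y|^2)^{b}f(y)\,d\nu$ after differentiation, and one needs $\sup_x (1-|x|^2)^{\beta'}\int_{\mathbb B}|R_{c'}(x,y)|(1-|y|^2)^{b}\,d\nu(y)<\infty$, which by the Forelli-Rudin estimate is equivalent to $b>\alpha-1$ together with the stated strict inequality on $c$.

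For the necessity direction I would, in each case, plug in an explicit family of test functions and force the exponent inequalities. The standard choices are $f_\lambda(y)=(1-|y|^2)^{-\lambda}$ (or its normalization in $L^p_\alpha$) for the inequalities of "integrability at the boundary" type, and, to capture the kernel-growth condition on $c$, functions concentrated near a boundary point, e.g. $f(y)=(1-|y|^2)^{-b}\chi_{E(a,r)}(y)$ on a pseudo-hyperbolic ball $E(a,r)$ with $|a|\to 1$, using the lower bound $|R_{c}(x,y)|\gtrsim |x-y|^{-(n+c)}$ valid on a Stolz-type region; evaluating $\|T_{bc}f\|_{b^q_\beta}$ from below along $x$ near $a$ gives the necessity of $c\le b+\frac{n+\beta}{q}-\frac{n+\alpha}{p}$ and its analogues, with the strictness appearing precisely in the cases where the extremal integral diverges only logarithmically. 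One must also handle the case where $T_{bc}f$ might fail to be harmonic or even finite: finiteness of $T_{bc}f(x)$ for a single $x$ already forces $n+c$ below a threshold, and the kernel's homogeneous expansion shows $T_{bc}f\in h(\mathbb B)$ whenever the integral converges locally uniformly.

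I expect the main obstacle to be the bookkeeping in the borderline cases — tracking exactly when an inequality among the six parameters must be strict versus non-strict — and, more technically, ensuring that the reduction via $D^m_t$ is reversible: one needs that $D^m_t T_{bc}$ is again an operator of the same type (with $c$ replaced by $c+m$ and $\beta$ by a large $\beta'$), that the constants are two-sided, and that this holds uniformly as the parameters approach the critical hyperplanes. Verifying the kernel derivative asymptotics $D^m_t R_c(x,y)\asymp R_{c+m}(x,y)$ with uniform control, and checking that the Forelli-Rudin estimates apply with the shifted (now large) parameters, is where the real work lies; once that is in place, cases (i)–(iv) follow from the corresponding classical $L^p_\alpha\to L^q_{\beta'}$ mapping theorems for the model kernel $|x-y|^{-(n+c+m)}$.
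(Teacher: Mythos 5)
Your core mechanism --- apply a radial differential operator to shift the target space into the classical range where Forelli--Rudin/Schur machinery applies --- is the right one, but you propose to carry out the entire weighted $L^p\to L^q$ analysis from scratch, whereas the paper's proof is a short reduction to the already-established Lebesgue-to-Lebesgue characterization (Theorem \ref{Theorem-Boundedness of T 1}, quoted from \cite{DOG1}). Concretely, the paper composes $T_{bc}$ with the isomorphism $D_c^{-\beta/q}:b^q_\beta\to b^q$ of Lemma \ref{Apply-Dst} and uses the exact identity $D^t_c T_{bc}f=T_{b,c+t}f$ (Lemma \ref{Lemma-Push-Dst into Tcb}) to conclude that $T_{bc}:L^p_\alpha\to b^q_\beta$ is bounded if and only if $T_{b,c-\beta/q}:L^p_\alpha\to L^q$ is; the latter is exactly the case $\beta=0$ of Theorem \ref{Theorem-Boundedness of T 1}, and the substitution $c\mapsto c-\beta/q$, $\beta\mapsto 0$ maps each inequality on $c$ onto itself, so the strict/non-strict dichotomy you list as your ``main obstacle'' is inherited verbatim rather than re-derived. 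Note also that with the base parameter of the derivative chosen equal to $c$, the relation $D^t_cR_c(x,y)=R_{c+t}(x,y)$ is an exact identity, not an asymptotic equivalence requiring the ``uniform control'' you anticipate verifying.

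Two concrete gaps in your plan as written. First, your necessity argument for the condition on $c$ rests on a pointwise lower bound $|R_c(x,y)|\gtrsim|x-y|^{-(n+c)}$ on a Stolz-type region. For harmonic (as opposed to holomorphic) Bergman--Besov kernels this is delicate: $R_c(x,y)$ is real and changes sign, and for $c\le-(1+n/2)$ the coefficients $\gamma_k(c)$ in (\ref{gamma k q-Definition}) take a different form, so such a lower bound is not available off the shelf for all $c\in\mathbb{R}$. The paper avoids this entirely: the first necessary condition comes from evaluating $T_{bc}f_{uv}$ at the origin, where $R_c(\cdot,0)\equiv1$ removes the kernel from the picture, and the condition on $c$ is obtained from the composed operator via Corollary \ref{Remark-T,S with beta bigger than -1} together with the necessity half of Theorem \ref{Theorem-Boundedness of T 1}. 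Second, pushing the derivative inside the integral (your ``$D^m_tT_{bc}$ is again an operator of the same type'') is only legitimate under an integrability hypothesis, namely $f\in L^1_b$ in Lemma \ref{Lemma-Push-Dst into Tcb}; the paper devotes a nontrivial part of the argument to verifying $L^p_\alpha\subset L^1_b$ under the first necessary condition (by H\"older when $1<p<\infty$, and directly when $p=1$ or $p=\infty$), a step your sketch omits but which is needed in both directions of the equivalence.
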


\begin{theorem}\label{Theorem-Boundedness of T 5}
Let  $b,c,\alpha,\beta \in \mathbb{R}$ and $1\leq p\leq\infty$.  Then
$T_{bc}$ is bounded from $L^p_\alpha$ to $b^\infty_\beta$  if and only if $(b,c,\alpha,\beta,p)$ satisfy one of the following conditions:
\begin{enumerate}
\item[(i)] $1<p<\infty$, $\alpha+1<p(b+1)$ and $c\leq b+\beta-\dfrac{n+\alpha}{p}$;
\item[(ii)] $p=1$, $\alpha<b$ and $c\leq b+\beta-(n+\alpha)$ or $\alpha\leq b$ and $c< b+\beta-(n+\alpha)$;
\item[(iii)] $p=\infty$, $\alpha-1<b$ and $c\leq b+\beta-\alpha$.
\end{enumerate}
\end{theorem}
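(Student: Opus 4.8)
I organize the plan around Theorem \ref{Theorem-Boundedness of T 5}. The plan is to first reduce, by means of the radial fractional differentiation operators $D^s_t$ of \cite{DU1}, to the parameter range in which the target is a genuine weighted $L^\infty$-type space, and then to prove sufficiency by a single Hölder--Forelli--Rudin estimate and necessity by testing on explicit extremal functions. Recall that for $\beta>0$ one has $b^\infty_\beta=h(\mathbb{B})\cap\mathcal{L}^\infty_\beta$ with equivalent norms, while for general $\beta$ the space $b^\infty_\beta$ is defined through an operator $D^s_t$ that carries $b^\infty_\beta$ isomorphically onto $b^\infty_{\beta+t}$ and that acts on the harmonic Bergman--Besov kernels by a relation of the form $D^c_t R_c(\cdot,y)=R_{c+t}(\cdot,y)$. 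Differentiating under the integral sign in (\ref{Main Operator}) then gives $D^c_t T_{bc}f=T_{b,\,c+t}f$, so $T_{bc}\colon L^p_\alpha\to b^\infty_\beta$ is bounded if and only if $T_{b,\,c+t}\colon L^p_\alpha\to b^\infty_{\beta+t}$ is. Since each of (i)--(iii) is invariant under $(c,\beta)\mapsto(c+t,\beta+t)$, we may assume $\beta$ (hence also $c+t$) as large as we like; in particular $b^\infty_\beta=h(\mathbb{B})\cap\mathcal{L}^\infty_\beta$ and the kernel estimate $|R_c(x,y)|\lesssim[x,y]^{-(n+c)}$ holds in its standard form. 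After this reduction the case $\beta$ large is the situation treated (for $\beta\ge 0$) in the preprint quoted in the abstract, so one may either invoke that result or argue directly as below.

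For sufficiency, assume one of (i)--(iii). Harmonicity of $T_{bc}f$ is immediate by differentiation under the integral, so it suffices to bound $(1-|x|^2)^\beta|T_{bc}f(x)|$. Inserting $|R_c(x,y)|\lesssim[x,y]^{-(n+c)}$ and, when $1<p<\infty$, splitting the integrand so that $(1-|y|^2)^{\alpha/p}$ is paired with $f$, Hölder's inequality gives $(1-|x|^2)^\beta|T_{bc}f(x)|\lesssim\|f\|_{L^p_\alpha}(1-|x|^2)^\beta\bigl(\int_{\mathbb{B}}[x,y]^{-(n+c)p'}(1-|y|^2)^{(b-\alpha/p)p'}d\nu(y)\bigr)^{1/p'}$, with the obvious modifications (an essential supremum when $p=1$; no splitting when $p=\infty$). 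The exponent $(b-\alpha/p)p'$ of the weight exceeds $-1$ exactly when $\alpha+1<p(b+1)$, the first hypothesis of (i); the analogous role is played by $\alpha<b$ in (ii) and by $\alpha-1<b$ in (iii). Under these, the Forelli--Rudin estimate evaluates the integral as $(1-|x|^2)^{\,n+(b-\alpha/p)p'-(n+c)p'}$ up to a logarithm in the borderline case, which does not occur here because $\beta$ was taken large; collecting powers of $1-|x|^2$ shows the whole expression is $\lesssim\|f\|_{L^p_\alpha}$ precisely when $c$ satisfies the stated inequality. The genuinely delicate point is the pair of endpoint alternatives in (ii): there $p'=\infty$ and one must analyze $\esssup_y[x,y]^{-(n+c)}(1-|y|^2)^{b-\alpha}$ carefully, distinguishing $\alpha<b$ from $\alpha=b$, which is where the strict-versus-non-strict dichotomy comes from.

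For necessity, suppose $T_{bc}\colon L^p_\alpha\to b^\infty_\beta$ is bounded. Testing on the radial functions $f_s(y)=(1-|y|^2)^s$ with $s$ just above the critical value $-(1+\alpha)/p$ (and $s\ge-\alpha$ when $p=\infty$), and using the asymptotics of $\int_{\mathbb{B}}R_c(x,y)(1-|y|^2)^{b+s}d\nu(y)$ as $|x|\to1$, already forces part of each condition. To capture the sharp constant $n+\alpha$ rather than $1+\alpha$, one tests on functions concentrated near a boundary point: for $1<p<\infty$ the dual extremal functions $f_a(y)=\operatorname{sgn}R_c(a,y)\,|R_c(a,y)|^{p'-1}(1-|y|^2)^{\sigma}$ with $|a|\to1$ and $\sigma$ chosen so that $\|f_a\|_{L^p_\alpha}$ and $T_{bc}f_a(a)$ are governed by the same Forelli--Rudin integral; for $p=1$ the normalized bumps $f_a=\chi_{B(a,r_a)}/\|\chi_{B(a,r_a)}\|_{L^1_\alpha}$, for which $T_{bc}f_a$ is comparable to $R_c(\cdot,a)$; and for $p=\infty$ the factor $(1-|y|^2)^{-\alpha}$ times such bumps. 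Comparing $\|T_{bc}f_a\|_{b^\infty_\beta}\ge(1-|a|^2)^\beta|T_{bc}f_a(a)|$ with $\|f_a\|_{L^p_\alpha}$ yields exactly the inequalities in (i)--(iii), with the correct strict or non-strict forms in (ii).

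The main obstacle is twofold. First, the borderline bookkeeping: tracking when a logarithmic factor appears in the Forelli--Rudin estimates and when it is absorbed by the weight $(1-|x|^2)^\beta$ is precisely what separates the two alternatives of (ii), and the reduction to large $\beta$ is the device that keeps this under control. Second, on the necessity side, the kernel $R_c$ changes sign, so the extremal and bump test functions detect the true size of $T_{bc}$ only once one knows that $R_c(x,y)$ is positive and comparable to $[x,y]^{-(n+c)}$ while $y$ stays in a Carleson-type region $\{y:[x,y]\lesssim 1-|x|^2\}$ about the diagonal; this pointwise property of the kernel, available from the preliminaries, is what makes the lower bounds go through.
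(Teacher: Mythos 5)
Your core reduction is precisely the paper's own proof: the paper composes $T_{bc}$ with $D_c^{-\beta+1}$ to shift the target from $b^\infty_\beta$ to $b^\infty_{1}=h(\mathbb{B})\cap\mathcal{L}^\infty_{1}$, identifies the composition with $T_{b,c-\beta+1}$ via the intertwining relation of Lemma \ref{Lemma-Push-Dst into Tcb}, and then quotes Theorem \ref{Theorem-Boundedness of T 2} at the shifted weight, exactly as you propose with ``large $\beta$'' in place of the paper's specific choice $\beta=1$; in both versions the point of shifting strictly above $0$ is to avoid the ``strict inequality when $\beta=0$'' caveat of Theorem \ref{Theorem-Boundedness of T 2}, so the non-strict inequalities in (i) and (iii) survive. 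Where you diverge is that the paper does not re-prove the cited $\mathcal{L}^\infty_\beta$ theorem: your second and third paragraphs (H\"older/Forelli--Rudin sufficiency, boundary-concentrated dual extremal and bump functions for necessity) are a self-contained substitute for that citation, and the necessity half of that sketch is the genuinely laborious part that the preprint carries out. The paper's own necessity argument is much lighter: the first condition in each part comes from evaluating $T_{bc}f_{uv}$ at the origin, where $R_c(0,y)\equiv 1$, so mere finiteness (not boundary asymptotics) suffices, and the second condition then comes for free from the reduction. One point you gloss over and should make explicit: the identity $D_c^t T_{bc}f=T_{b,c+t}f$ is only justified for $f\in L^1_b$, and the inclusion $L^p_\alpha\subset L^1_b$ is not automatic --- it is equivalent to the first necessary condition ($\alpha+1<p(b+1)$, etc.), which must therefore be derived \emph{before} the intertwining is invoked; the paper does this with the $f_{uv}$ test at the origin followed by a H\"older computation. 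This is an ordering issue rather than a fatal gap, since boundedness of $T_{bc}$ already forces $T_{bc}f_{uv}(0)$ to be finite, but as written your ``if and only if'' reduction silently assumes it.
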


Moreover, we also determine when $T_{bc}$ is bounded from $L^p_\alpha$ to $h^\infty$.
\begin{theorem}\label{Theorem-Boundedness of T 6}
Let  $b,c,\alpha \in \mathbb{R}$ and $1\leq p\leq\infty$.  Then
$T_{bc}$ is bounded from $L^p_\alpha$ to $h^\infty$ if and only if $(b,c,\alpha,p)$ satisfy one of the following conditions:
\begin{enumerate}
\item[(i)] $1<p<\infty$, $\alpha+1<p(b+1)$ and $c< b-\dfrac{n+\alpha}{p}$;
\item[(ii)] $p=1$, $\alpha<b$ and $c\leq b-(n+\alpha)$ or $\alpha\leq b$ and $c< b-(n+\alpha)$;
\item[(iii)] $p=\infty$, $\alpha-1<b$ and $c< b-\alpha$.
\end{enumerate}
\end{theorem}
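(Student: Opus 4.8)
Since a bounded harmonic function is continuous, $h^\infty=h(\mathbb B)\cap L^\infty(\nu)$ with equality of norms, so $T_{bc}\colon L^p_\alpha\to h^\infty$ is bounded exactly when (1) $T_{bc}f$ is a well-defined harmonic function for each $f\in L^p_\alpha$ and (2) $\sup_{x\in\mathbb B}|T_{bc}f(x)|\lesssim\|f\|_{L^p_\alpha}$. The plan is to prove sufficiency and necessity separately, using the same two analytic tools that underlie Theorems \ref{Theorem-Boundedness of T 4} and \ref{Theorem-Boundedness of T 5}: \emph{(a)} the pointwise size estimate for the Bergman--Besov kernel from \cite{GKU1},\cite{GKU2},\cite{DU1}, namely $|R_c(x,y)|\lesssim [x,y]^{-(n+c)}$ when $c>-n$ (and $|R_c(x,y)|\lesssim 1$, up to a logarithm when $c=-n$, otherwise), where $[x,y]=(1-2x\cdot y+|x|^2|y|^2)^{1/2}$; and \emph{(b)} the Forelli--Rudin type integral estimate: for $s>-1$, $\int_{\mathbb B}(1-|y|^2)^s[x,y]^{-(n+s+t)}\,d\nu(y)$ is bounded on $\mathbb B$ iff $t<0$, is $\asymp\log\frac1{1-|x|^2}$ if $t=0$, and is $\asymp(1-|x|^2)^{-t}$ if $t>0$, together with its pointwise (supremum) analogue and the matching \emph{lower} bounds.

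For \emph{sufficiency}, fix $f\in L^p_\alpha$ and bound $|T_{bc}f(x)|\le\int_{\mathbb B}|R_c(x,y)|\,|f(y)|(1-|y|^2)^b\,d\nu(y)$ using \emph{(a)}. When $1<p<\infty$ I would split $(1-|y|^2)^b=(1-|y|^2)^{b-\alpha/p}(1-|y|^2)^{\alpha/p}$, apply Hölder, absorb the second factor into $\|f\|_{L^p_\alpha}$, and estimate the remaining integral $\bigl(\int_{\mathbb B}[x,y]^{-(n+c)p'}(1-|y|^2)^{(b-\alpha/p)p'}\,d\nu(y)\bigr)^{1/p'}$ by \emph{(b)}; it is bounded in $x$ precisely when $(b-\alpha/p)p'>-1$ and the relevant $t<0$, which reduce to $\alpha+1<p(b+1)$ and $c<b-(n+\alpha)/p$, i.e.\ (i). For $p=\infty$ (domain $\mathcal L^\infty_\alpha$) one uses $|f(y)|\le\|f\|_{\mathcal L^\infty_\alpha}(1-|y|^2)^{-\alpha}$ and reduces directly to \emph{(b)} with $s=b-\alpha$, $t=c-b+\alpha$, obtaining (iii). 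For $p=1$ one writes $|f(y)|(1-|y|^2)^b=[|f(y)|(1-|y|^2)^\alpha](1-|y|^2)^{b-\alpha}$ and bounds by $V_\alpha\|f\|_{L^1_\alpha}\sup_y|R_c(x,y)|(1-|y|^2)^{b-\alpha}$; the supremum version of \emph{(b)} shows this is finite and uniformly bounded in $x$ exactly in the two regimes of (ii), the split "$\alpha<b$ with non-strict $c$" versus "$\alpha\le b$ with strict $c$" being forced by the endpoint behaviour of that supremum. In each case the bound is locally uniform in $x$, and $T_{bc}f$ is then a locally uniform limit of the harmonic polynomials obtained by integrating the partial sums of \eqref{Rq - Series expansion} against $f(y)(1-|y|^2)^b$, hence harmonic; this gives (1) and (2).

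For \emph{necessity}, I would extract the conditions from two test computations. First, $R_c(0,y)=1$ by \eqref{Rq - Series expansion}, so $T_{bc}f(0)=\int_{\mathbb B}f(y)(1-|y|^2)^b\,d\nu(y)$, and $|T_{bc}f(0)|\le\|T_{bc}f\|_{L^\infty}\lesssim\|f\|_{L^p_\alpha}$ forces $y\mapsto(1-|y|^2)^{b-\alpha/p}\in L^{p'}(\nu)$ when $1<p<\infty$ (i.e.\ $\alpha+1<p(b+1)$), $(1-|y|^2)^{b-\alpha}\in L^\infty$ when $p=1$ (i.e.\ $\alpha\le b$), and $(1-|y|^2)^{b-\alpha}\in L^1(\nu)$ when $p=\infty$ (i.e.\ $\alpha-1<b$) --- exactly the "$\alpha$--conditions" in (i)--(iii). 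Second, to pin down the \emph{strict} inequality on $c$, I would test against a family concentrating at the boundary --- e.g.\ $f_a(y)=(1-|y|^2)^s\chi_{\{|y|>a\}}(y)$ with $s$ chosen so $\|f_a\|_{L^p_\alpha}$ stays bounded as $a\uparrow1$, or $f_a(y)=\overline{R_d(ae,y)}$ for a fixed unit vector $e$ --- and estimate $\|T_{bc}f_a\|_{L^\infty}\ge|T_{bc}f_a(ae)|$; by the \emph{lower} bound in \emph{(b)}, if $c$ equals the borderline value of (i)--(iii) this grows like $\log\frac1{1-a^2}\to\infty$, contradicting boundedness, while the $p=1$ dichotomy is recovered the same way against the supremum version of \emph{(b)}. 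I expect this last step to be the main obstacle, and it is precisely where mapping into $h^\infty$ is genuinely more restrictive than mapping into $b^\infty_\beta$ in Theorem \ref{Theorem-Boundedness of T 5}, where the logarithmic growth at the borderline is harmless: one must choose the $f_a$ so that the varying sign of $R_c(x,y)$ does not destroy the lower bound, which typically means localizing $y$ to a hyperbolic ball about $x$ on which the kernel is comparable to a positive kernel. Everything else --- the case $p=1$, the convergence and harmonicity of $T_{bc}f$, and the tame parameter range $c\le -n$ in \emph{(a)} --- is routine once these pieces are assembled, and in fact the whole argument parallels the proofs of Theorems \ref{Theorem-Boundedness of T 4} and \ref{Theorem-Boundedness of T 5} closely enough that one would present it as a list of the required changes rather than from scratch.
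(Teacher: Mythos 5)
Your program sets out to prove the theorem from scratch via kernel estimates, Forelli--Rudin integrals, H\"older, and boundary-concentrating test functions. That is not how the paper proceeds, and the difference matters: everything you propose to establish is exactly the content of Theorem \ref{Theorem-Boundedness of T 2} specialized to $\beta=0$, which the paper quotes as a known result from \cite{DOG1}. Since $\mathcal{L}^\infty_0=L^\infty$ and $h^\infty=h(\mathbb{B})\cap L^\infty$ with equal norms, the paper's entire proof is: boundedness into $h^\infty$ implies boundedness into $L^\infty$, so Theorem \ref{Theorem-Boundedness of T 2} with $\beta=0$ gives the stated conditions (note that the clause ``and the strict inequality holds when $\beta=0$'' in parts (i) and (iii) of that theorem is precisely what turns $c\le\cdots$ into $c<\cdots$ here); conversely, under those conditions $T_{bc}:L^p_\alpha\to L^\infty$ is bounded by the same theorem, and Corollary \ref{Remark-T,S with beta bigger than -1} guarantees that the image consists of harmonic functions, hence lands in $h^\infty$. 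Two sentences, no new analysis.

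Measured against that, your proposal has a genuine gap at the one point where a from-scratch argument is actually hard: the necessity of the \emph{strict} inequality $c<b-(n+\alpha)/p$ (resp.\ $c<b-\alpha$) at the borderline. You correctly identify that the varying sign of $R_c(x,y)$ threatens the lower bound and that one must localize to a region where the kernel is effectively positive, but you leave this as ``the main obstacle'' rather than carrying it out; the sufficiency half and the $x=0$ duality test are routine, so the unproven step is precisely the nontrivial content. (Your remark that this is where $h^\infty$ is ``genuinely more restrictive'' than $b^\infty_\beta$ is also slightly off target: the strictness already appears at $\beta=0$ in Theorem \ref{Theorem-Boundedness of T 2}, i.e.\ for the target $\mathcal{L}^\infty_0$, and has nothing to do with harmonicity of the image.) If you intend a self-contained proof you must supply the borderline lower bound; otherwise the efficient and intended route is the reduction above.
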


Harmonic Bergman-Besov projections on harmonic spaces  have been studied for some time. See \cite[Theorem 1.4]{GKU2} for  $1\leq p=q<\infty$, $b=c, \, \alpha=\beta \in \mathbb{R}$ and  \cite[Theorem 1.6]{DU1} for $p=q=\infty$, $b=c, \, \alpha=\beta \in \mathbb{R}$.   The other result we know of on Besov spaces is \cite[Theorem 4.1]{JP2} in which $1\leq p=q\leq\infty$, $b=c> -1$ and $\alpha=\beta=-n$. Note that in our results we have $b,c,\alpha,\beta \in \mathbb{R}$ unrestrictedly and thus  our operators in some sense generalize  the harmonic Bergman-Besov projections. The holomorphic counterparts of our results on the boundedness of  integral operators induced by holomorphic Bergman-Besov kernels  appear in \cite[Theorem 1.8 and Theorem 1.9]{KU1}.

To experts in analysis, one of the interesting problem might be the boundedness of $T_{bc}$
between different weighted Lebesgue classes. This problem is considered earlier in \cite{DOG1} as seven theorems that describe
boundedness of $T_{bc}$ in terms of the six parameters $(b,c,\alpha,\beta,p,q)$ involved and the proof of our main results most heavily depends on these results. We combine all the seven theorems out there as two theorems below depending on the value of $q$.  The following  theorem is a combination of  \cite[Theorems 1.1, 1.2, 1,3 and 1.4]{DOG1} with $1\leq q< \infty$. Note that they include an extra operator that replace $R_{c} (x,y)$ in the integral (\ref{Main Operator}) by  $|R_{c} (x,y)|$ because they need operators with positive kernels to apply Schur tests.

\begin{theorem}\label{Theorem-Boundedness of T 1}
Let  $b,c,\alpha,\beta \in \mathbb{R}$  with $\beta>-1$, $1\leq p\leq\infty$ and $1\leq q< \infty$.  Then
$T_{bc}$ is bounded from $L^p_\alpha$ to $L^q_\beta$ if and only if $(b,c,\alpha,\beta,p,q)$ satisfy one of the following conditions:
\begin{enumerate}
\item[(i)] $1<p\leq q<\infty$, $\alpha+1<p(b+1)$ and $c\leq b+\dfrac{n+\beta}{q}-\dfrac{n+\alpha}{p}$;
\item[(ii)] $1=p\leq q<\infty$, $\alpha<b$ and $c\leq b+\dfrac{n+\beta}{q}-(n+\alpha)$ or $\alpha\leq b$ and $c< b+\dfrac{n+\beta}{q}-(n+\alpha)$;
\item[(iii)] $1\leq q <p< \infty$, $\alpha+1<p(b+1)$ and $c< b+\dfrac{1+\beta}{q}-\dfrac{1+\alpha}{p}$;
 \item[(iv)] $1\leq q<p=\infty$, $\alpha-1<b$ and $c<b+\dfrac{\beta+1}{q}-\alpha$.
\end{enumerate}
\end{theorem}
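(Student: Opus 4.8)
This theorem is not proved from scratch here; it is simply \cite[Theorems 1.1--1.4]{DOG1} reassembled according to whether $q$ is finite (the present statement) or infinite (its companion, used in the same way). To explain where the four alternatives come from, the plan is as follows. Everything rests on two facts recalled in Section~\ref{section-Preliminaries}: the pointwise estimate $|R_{c}(x,y)|\lesssim [x,y]^{-(n+c)}$, valid uniformly in $c$ because $\gamma_{k}(c)\asymp k^{1+c}$ in both regimes of \eqref{gamma k q-Definition} (with the customary logarithmic correction only in the critical case $c=-n$), where $[x,y]=(1-2\,x\cdot y+|x|^2|y|^2)^{1/2}$; and the standard integral asymptotics
\[
\int_{\mathbb{B}}\frac{(1-|y|^2)^{t}}{[x,y]^{n+s}}\,d\nu(y)\asymp
\begin{cases}
1,& s<t+1,\\
\log\dfrac{1}{1-|x|^2},& s=t+1,\\
(1-|x|^2)^{t+1-s},& s>t+1,
\end{cases}
\qquad(t>-1),
\]
which turn every estimate into bookkeeping with powers of $1-|x|^2$.

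\emph{Sufficiency.} First replace $R_{c}$ by $|R_{c}|$, getting a larger operator with a positive kernel, and split by the order of $p$ and $q$. For $1<p\le q$ apply Schur's test with power test functions $h(x)=(1-|x|^{2})^{-\theta}$; the displayed asymptotics show that an admissible $\theta$ exists exactly when $\alpha+1<p(b+1)$ and $c\le b+\frac{n+\beta}{q}-\frac{n+\alpha}{p}$, which is (i). For $1\le q<p$ use the $L^{p}\to L^{q}$ version of Schur's test (again against power weights on both sides), whose exponent count forces the strict inequalities of (iii), and at $p=\infty$ of (iv); the side condition $\alpha-1<b$ in (iv) is precisely the convergence, for data in $\mathcal{L}^{\infty}_{\alpha}$, of $\int_{\mathbb{B}}[x,y]^{-(n+c)}(1-|y|^{2})^{b-\alpha}\,d\nu(y)$, needed even to define $T_{bc}$. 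The endpoint $p=1$ is done directly by Fubini/Minkowski, and this is also where the dichotomy in (ii) appears: on the line $c=b+\frac{n+\beta}{q}-(n+\alpha)$ the governing kernel integral converges iff $\alpha<b$ strictly, whereas off that line $\alpha\le b$ suffices.

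\emph{Necessity.} Assuming $T_{bc}\colon L^{p}_{\alpha}\to L^{q}_{\beta}$ bounded, test on explicit families: $f(y)=(1-|y|^{2})^{s}$ when $p=\infty$, and normalized kernels $R_{c}(\cdot,a)/\|R_{c}(\cdot,a)\|_{L^{p}_{\alpha}}$ (or boundary bumps) when $p<\infty$, computing $T_{bc}f$ near the sphere with the same asymptotics; letting $s$ approach its critical value or $|a|\to1$ extracts each inequality in (i)--(iv), and the logarithmically divergent borderline cases are what dictate ``$<$'' versus ``$\le$''. Duality between $L^{p}_{\alpha}$ and $L^{p'}$ with the matching weight covers the ranges where direct testing is awkward and also accounts for the formal symmetry between the $p\le q$ and $q<p$ conditions.

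I expect the only real difficulty to be the borderline bookkeeping, not a single hard estimate: making the kernel bound uniformly correct across the two cases of $\gamma_{k}(c)$, then pinning down exactly which endpoints force strict inequalities and where the two-alternative form of (ii) is unavoidable --- all of which comes down to a careful treatment of the logarithmically critical integrals. With those settled, the Schur tests and the test-function computations are routine; full details are in \cite{DOG1}.
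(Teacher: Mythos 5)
Your proposal is correct and matches the paper's treatment: Theorem \ref{Theorem-Boundedness of T 1} is not proved in this paper but is quoted verbatim as a combination of \cite[Theorems 1.1--1.4]{DOG1}, exactly as you say. Your sketch of the underlying argument (replacing $R_c$ by $|R_c|$ to get a positive kernel, Schur tests against power weights, the standard integral asymptotics, and test functions for necessity) is consistent with what the paper itself reports about the method of \cite{DOG1}, so no further comparison is needed.
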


The following  theorem is a combination of  \cite[Theorems 1.4, 1.5 and 1.7 ]{DOG1} with $ q=\infty$.

\begin{theorem}\label{Theorem-Boundedness of T 2}
Let  $b,c,\alpha,\beta \in \mathbb{R}$ with $\beta\geq0$ and $1\leq p\leq\infty$.  Then
$T_{bc}$ is bounded from $L^p_\alpha$ to $\mathcal{L}^\infty_\beta$ if and only if $(b,c,\alpha,\beta,p)$ satisfy one of the following conditions:
\begin{enumerate}
\item[(i)] $1<p<\infty$, $\alpha+1<p(b+1)$ and $c\leq b+\beta-\dfrac{n+\alpha}{p}$, and the strict inequality holds  when $\beta=0$;
\item[(ii)] $p=1$, $\alpha<b$ and $c\leq b+\beta-(n+\alpha)$ or $\alpha\leq b$ and $c< b+\beta-(n+\alpha)$;
\item[(iii)] $p=\infty$, $\alpha-1<b$ and $c\leq b+\beta-\alpha$, and the strict inequality holds when $\beta=0$.
\end{enumerate}
\end{theorem}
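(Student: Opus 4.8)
The plan is to prove the two implications separately and, for the sufficiency half, to route everything through the majorant operator carrying the positive kernel $|R_c(x,y)|$, exactly as the cited sources do; concretely, the statement is the $q=\infty$ assembly of \cite[Theorems 1.4, 1.5, 1.7]{DOG1}, and I would reprove it by the same machinery rather than merely quoting. The two analytic inputs I would fix at the outset are the pointwise kernel estimate $|R_c(x,y)| \lesssim [x,y]^{-(n+c)}$ (with the milder bounds for $c\le -n$), where $[x,y]^2 = 1-2x\cdot y+|x|^2|y|^2$, and the Forelli--Rudin type asymptotics for $\int_{\mathbb{B}} (1-|y|^2)^s\,[x,y]^{-(n+s+\tau)}\,d\nu(y)$, which is $\approx (1-|x|^2)^{-\tau}$ when $\tau>0$, logarithmic when $\tau=0$, and bounded when $\tau<0$, valid provided $s>-1$. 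Because the target is $\mathcal{L}^\infty_\beta$, the goal in every case is the single uniform estimate $\sup_{x}(1-|x|^2)^\beta|T_{bc}f(x)|\lesssim\|f\|_{L^p_\alpha}$, so no $L^q$ integration in $x$ is needed and the whole argument is pointwise in $x$.

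For sufficiency, since $|T_{bc}f|\le \widetilde T_{bc}|f|$, where $\widetilde T_{bc}$ is the operator with kernel $|R_c|$, it suffices to bound $(1-|x|^2)^\beta\widetilde T_{bc}|f|(x)$ uniformly. When $1<p<\infty$ I would split off the weight $(1-|y|^2)^{\alpha/p}$ and apply H\"older with exponent $p$; the surviving factor is a Forelli--Rudin integral with $s=(b-\alpha/p)p'$, so its convergence is equivalent to $\alpha+1<p(b+1)$, and a direct computation gives the relevant power $\tau/p'=c-b+(n+\alpha)/p$. Matching this against $(1-|x|^2)^\beta$ yields $c\le b+\beta-(n+\alpha)/p$: for $\beta>0$ the weight absorbs both the threshold logarithm and the boundary growth up to the critical exponent, so equality is admissible, whereas for $\beta=0$ boundedness forces the regime $\tau<0$ and hence the strict inequality, which is precisely where the hypothesis $\beta\ge0$ is consumed. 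The case $p=\infty$ is the same computation after majorising $|f(y)|\le\|f\|_{\mathcal{L}^\infty_\alpha}(1-|y|^2)^{-\alpha}$, producing the convergence condition $\alpha-1<b$ and the threshold $c\le b+\beta-\alpha$. The case $p=1$ is structurally different: boundedness into $\mathcal{L}^\infty_\beta$ reduces to finiteness of the two-variable supremum $\sup_{x,y}(1-|x|^2)^\beta|R_c(x,y)|(1-|y|^2)^{b-\alpha}$, whose analysis near the boundary diagonal, using the geometry of $[x,y]$, produces the split alternative (ii).

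For necessity I would argue contrapositively with test functions. First, if the integrability threshold fails one constructs $f\in L^p_\alpha$ for which the integral defining $T_{bc}f$ already diverges, so the operator cannot even be well defined and bounded; this recovers the first inequality in each case. To force the sharp inequality on $c$, together with its strict form when $\beta=0$, I would test against a family $f_a$ concentrated on a pseudohyperbolic ball about a point $a$ with $|a|\to1$, chosen on a region where $R_c(a,y)$ keeps a fixed sign and size $\approx[a,y]^{-(n+c)}$; the boundedness inequality $(1-|a|^2)^\beta|T_{bc}f_a(a)|\lesssim\|f_a\|_{L^p_\alpha}$ then, after letting $|a|\to1$, yields exactly the stated constraint on $c$. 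The main obstacle is this necessity step: $R_c$ is signed and oscillatory, so one must control its sign and magnitude on the test region to guarantee that the genuine operator $T_{bc}$, rather than its positive majorant $\widetilde T_{bc}$, is already unbounded when the conditions fail. The $p=1$ regime is the most delicate, since the two-variable supremum has to be analysed finely enough to separate the $\alpha<b$ and $\alpha=b$ cases that give the two alternatives in (ii).
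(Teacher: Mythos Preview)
The paper does not prove this theorem: it is stated there as a quotation of \cite[Theorems~1.4, 1.5, 1.7]{DOG1} and is used as a black box in the proofs of Theorems~\ref{Theorem-Boundedness of T 5} and~\ref{Theorem-Boundedness of T 6}. So there is no in-paper argument to compare against; you are, as you say, reproving the cited result rather than following the present paper.

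Your outline is correct and is essentially the machinery of \cite{DOG1} and its holomorphic prototype \cite{KU1}: sufficiency via the positive majorant $\widetilde T_{bc}$, H\"older in $y$, and the Forelli--Rudin asymptotics, with the case split $\beta>0$ versus $\beta=0$ correctly isolating where the strict inequality on $c$ is forced; and the $p=1$ reduction to the two-variable kernel supremum is the right picture for both directions once one invokes the diagonal lower bound $R_c(x,x)\sim(1-|x|^2)^{-(n+c)}$ for $c>-n$. One methodological difference worth noting: for necessity, \cite{DOG1} and the present paper favour the radial test family $f_{uv}(y)=(1-|y|^2)^u\bigl(1+\log\frac{1}{1-|y|^2}\bigr)^{-v}$ together with the positivity of $R_c(x,\cdot)$ for small $|x|$ (Lemma~\ref{Lemma-Stay away from 0}) and its diagonal growth, which sidesteps the sign issue you flag. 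Your localized approach via pseudohyperbolic balls also works but does require the two-sided near-diagonal estimate you allude to; either route yields the sharp thresholds, including the split alternative in~(ii).
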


The conditions $\beta>-1$ when $q<\infty$ in Theorem \ref{Theorem-Boundedness of T 1}  and $\beta\geq 0$ when $q=\infty$  in Theorem \ref{Theorem-Boundedness of T 2} cannot be removed as  explained in \cite[Corollary 1.4]{DOG1}. Since we use repeatedly in this paper, this result is given again  as Corollary \ref{Remark-T,S with beta bigger than -1} below.   Notice that, our results are a variation of these theorems that removes the annoying conditions $\beta>-1$ when $q<\infty$ and $\beta\geq 0$ when $q=\infty$.

In the next  Section \ref{section-Preliminaries}, we collect some known facts about the harmonic Bergman-Besov and weighted Bloch spaces.
Sections \ref{Proof1} and \ref{Proof2} are devoted to the proofs of Theorem\ref{Theorem-Boundedness of T 4} and Theorems \ref{Theorem-Boundedness of T 5} and \ref{Theorem-Boundedness of T 6}, respectively.

\section{ Preliminaries}\label{section-Preliminaries}
For two positive expressions $X$ and $Y$, we  write $X\lesssim Y$  if there exists a positive constant  $C$, whose
exact value is inessential, such that $X\leq C Y$.
We also write $X\sim Y$  if both $X\lesssim Y$ and $Y\lesssim X$.

The Pochhammer symbol $(a)_b$ is defined by
\begin{equation*}
 (a)_b=\frac{\Gamma(a+b)}{\Gamma(a)},
\end{equation*}
when $a$ and $a+b$ are off the pole set $-\mathbb{N}$ of the gamma function. Stirling formula gives
\begin{equation}\label{Stirling}
  \frac{(a)_c}{(b)_c} \sim c^{a-b}, \quad c\to \infty.
\end{equation}

Let $\mathbb{S}$ be the unit sphere  in $\mathbb{R}^n$ and $\sigma$ be the   surface  measure on $\mathbb{S}$ normalized so that $\sigma(\mathbb{S})=1$. For $f\in L^{1}_{0}$, the polar coordinates formula is
\begin{equation*}
\int_{\mathbb{B}}f(x) \ d\nu(x)=n\int_{0}^{1} \epsilon^{n-1}\int_{\mathbb{S}}f(\epsilon\zeta) \ d\sigma(\zeta) \ d\epsilon,
\end{equation*}
in which $x=\epsilon\zeta$ with $\epsilon>0$ and $\zeta \in \mathbb{S}$.

We let $1\leq p,p'\leq \infty$ be the conjugate exponent. That is, if $1< p<\infty$, then $\frac{1}{p}+\frac{1}{p'}=1$; if $p=1$, then $p'=\infty$ and if $p=\infty$, then $p'=1$.

We show an integral inner product on a function space $X$  by $[\cdot,\cdot]_{X}$.

In multi-index notation, $m=(m_1,\dots,m_n)$ is an n-tuple of non-negative integers $m_1,\dots,m_n$ and
\begin{equation*}
 \partial^m f= \frac{\partial^{|m|} f}{\partial x_1^{m_1}\cdots\partial x_n^{m_n}}
\end{equation*}
is the usual partial derivative for smooth $f$, where $|m|=m_1+\dots+m_n$.

The weighted harmonic Bergman spaces $b^p_\alpha$ $(\alpha>-1)$ can be extended  to all $\alpha \in \mathbb{R}$. For $\alpha\in\mathbb{R}$ and $0<p<\infty$, let $N$ be a non-negative integer such that
$\alpha+pN>-1$. The harmonic Bergman-Besov space $b^p_\alpha$ consists of all $f\in h(\mathbb{B})$ such that
\[
(1-|x|^2)^N \partial^m f \in L^p_\alpha,
\]
for every multi-index $m$ with $|m|=N$. The space $b^p_\alpha$
 does not depend on the choice of $N$ as long as $\alpha+pN>-1$ is satisfied.

Likewise, given $\alpha\in \mathbb{R}$, pick a non-negative integer $N$ such that $\alpha+N>0$. The weighted harmonic Bloch space $b^\infty_\alpha$ consists of all $f\in h(\mathbb{B})$ such that
\[
 (1-|x|^2)^{N} \partial^m f \in \mathcal{L}^\infty_\alpha,
\]
for every multi-index $m$ with $|m|=N$. When $\alpha=0$, one can choose  $N=1$ and
\[
b^\infty_0=\Big\{f\in h(\mathbb{B}): \sup_{x\in\mathbb{B}}\, (1-|x|^2)|\nabla f(x)| <\infty\Big\}.
\]
This is the most studied member of the family. As before, the spaces $b^\infty_\alpha$
do not depend on the choice of $N$  as long as  $\alpha+N>0$
is satisfied

In the definitions of $b^p_\alpha$ and $b^\infty_\alpha$, instead of partial derivatives one can use  more effectively certain radial differential operators $D^t_s: h(\mathbb{B}) \to h(\mathbb{B})$, $(s,t \in \mathbb{R})$
defined in terms of reproducing kernels of harmonic Bergman spaces that are introduced in \cite{GKU1} and \cite{GKU2}.

Before going to the definition, note that for every $\alpha\in \mathbb{R}$ we have $\gamma_{0} (\alpha)=1$, and therefore
\begin{equation}\label{Rq(x,0)}
R_\alpha(x,0)=R_\alpha(0,y)=1, \quad (x,y\in \mathbb{B}, \alpha\in \mathbb{R}).
\end{equation}
Checking the two cases in (\ref{gamma k q-Definition}), we have by (\ref{Stirling})
\begin{equation}\label{gamma-k-asymptotic}
\gamma_k(\alpha) \sim k^{1+\alpha} \quad (k\to \infty).
\end{equation}
$R_\alpha(x,y)$ is harmonic as a function of either of its variables on $\overline{\mathbb{B}}$.

For any $f\in h(\mathbb{B})$ there exist homogeneous harmonic polynomials $f_k$ of degree $k$ such that
$f=\sum_{k=0}^{\infty} f_k$,  the series converging absolutely and uniformly on compact subsets of $\mathbb{B}$ which is called the homogeneous expansion of $f$ (see \cite{ABR}).

\begin{definition}
Let $f=\sum_{k=0}^\infty f_k\in h(\mathbb{B})$ be given by its homogeneous expansion. For $s,t\in\mathbb{R}$ we define  $D_s^t$ on $ h(\mathbb{B}) $  by
\begin{equation*}
  D_s^t f := \sum_{k=0}^\infty \frac{\gamma_k(s+t)}{\gamma_k(s)} \, f_k.
\end{equation*}
\end{definition}
By (\ref{gamma-k-asymptotic}), $\gamma_k(s+t)/\gamma_k(s) \sim k^t$ for any $s,t$ and, roughly speaking, $D_s^t$ multiplies the $k$th homogeneous part of $f$ by $k^{t}$. For every $s\in \mathbb{R}$, $D_s^0=I$, the identity. An important property of $D^t_s$ is that it is invertible with two-sided inverse $D_{s+t}^{-t}$:
\begin{equation*}
D^{-t}_{s+t} D^t_s = D^t_s D^{-t}_{s+t} = I,
\end{equation*}
which follows from the additive property $D_{s+t}^{z} D_s^t = D_s^{z+t}$.

For every $s,t \in \mathbb{R}$, the map $D^t_s: h(\mathbb{B})\to h(\mathbb{B})$ is continuous in the topology
of uniform convergence on compact subsets (see \cite[Theorem 3.2]{GKU2}). The parameter $s$ plays a minor role and is used to have the precise relation $D_s^t R_s(x,y)=R_{s+t}(x,y)$.

Consider the linear transformation $I_{s}^{t}$ defined for $f\in h(\mathbb{B})$ by
\begin{equation*}
  I^t_s f(x) := (1-|x|^2)^t D^t_s f(x).
\end{equation*}
The  spaces $b^p_\alpha$ and $b^\infty_\alpha$ can equivalently be defined by using the operators  $ D^t_s $.
\begin{definition}\label{definition of the h B-B space}
For $0<p<\infty$ and $\alpha \in \mathbb{R}$, we define the harmonic Bergman-Besov space $b^p_\alpha$ to consists of all $f\in h(\mathbb{B})$ for which $ I^t_s f$
belongs to  $L^p_\alpha$ for some  $s,t$ satisfying (see \cite{GKU2} when $1\leq p<\infty$, and \cite{DOG} when $0<p<1$)
\begin{equation}\label{alpha+pt}
 \alpha+pt>-1.
 \end{equation}
The quantity
\[
\|f\|^p_{b^p_\alpha} = \| I^t_s f\|^p_{L^p_\alpha}=c_\alpha \int_{\mathbb{B}} |D^t_s f(x)|^p (1-|x|^2)^{\alpha+pt} d\nu(x) <\infty
\]
defines a norm (quasinorm when $0<p<1$) on $b^p_\alpha$ for any such $s,t$.
\end{definition}

\begin{definition}\label{definition of the h B space}
For $\alpha \in \mathbb{R}$, we define the harmonic Bloch space $b^\infty_\alpha$ to consists of all $f\in h(\mathbb{B})$  for which $ I^t_s f$ belongs to  $\mathcal{L}^\infty_\alpha$ for some  $s,t$ satisfying (see \cite{DU1})
\begin{equation}\label{alpha+t}
 \alpha+t>0.
\end{equation}
The quantity
\[
\|f\|_{b^\infty_\alpha}=\| I^t_s f\|^p_{L^\infty_\alpha}= \sup_{x\in \mathbb{B}}\, (1-|x|^2)^{\alpha+t} |D^t_s f(x)| <\infty.
\]
defines a norm on $b^\infty_\alpha$ for any such $s,t$.
\end{definition}

 It is well-known that Definitions \ref{definition of the h B-B space} and \ref{definition of the h B space} are independent of $s,t$ under (\ref{alpha+pt}) and (\ref{alpha+t}), respectively and the norms (quasinorms when $0<p<1$) on a given space are all equivalent. Thus for a given pair $s,t$,  $ I^t_s$ isometrically imbeds $b^p_\alpha$ into $L^p_\alpha$ if and only if (\ref{alpha+pt}) holds, and $ I^t_s$ isometrically imbeds $b^\infty_\alpha$ into $L^\infty_\alpha$ if and only if (\ref{alpha+t}) holds.

The most significant  property of the operators $D^t_s$ is that it allows us to pass from one Bergman-Besov (or weighted Bloch) space to another. Actually, we have the following isomorphisms.

\begin{lemma}\label{Apply-Dst}
Let $0<p<\infty$ and $\alpha,s,t\in \mathbb{R}$.
\begin{enumerate}
  \item[(i)] The map $D^t_s:b^p_\alpha \to b^p_{\alpha+pt}$ is an isomorphism.
  \item[(ii)] The map $D^t_s:b^\infty_\alpha \to b^\infty_{\alpha+t}$ is an isomorphism.
\end{enumerate}
\end{lemma}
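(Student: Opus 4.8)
The plan is to prove Lemma \ref{Apply-Dst} by combining the multiplicative/additive calculus of the operators $D^t_s$ with the fact that the defining (quasi)norms of $b^p_\alpha$ and $b^\infty_\alpha$ are independent of the auxiliary parameters $s,t$ used to realize them. The essential observation is that $D^t_s$ acts on homogeneous expansions by prescribed scalar multipliers, so it commutes (up to a parameter shift) with every other such operator, and in particular the composition of two of them is again one of them.

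For part (i), fix $f\in b^p_\alpha$. By Definition \ref{definition of the h B-B space} there exist $u,v\in\mathbb{R}$ with $\alpha+pv>-1$ such that $\|f\|_{b^p_\alpha}\sim \|I^v_u f\|_{L^p_\alpha}$, and moreover any such pair gives an equivalent (quasi)norm. To show $D^t_s f\in b^p_{\alpha+pt}$ I must exhibit suitable parameters $u',v'$ with $(\alpha+pt)+pv'>-1$ for which $I^{v'}_{u'}(D^t_s f)\in L^p_{\alpha+pt}$, and control its (quasi)norm by $\|f\|_{b^p_\alpha}$. The natural choice is $v'=v-t$ (and $u'$ chosen compatibly, say $u'=s+t$ using $D^z_{s+t}D^t_s=D^{z+t}_s$ so that $D^{v-t}_{s+t}D^t_s=D^v_s$); then $(\alpha+pt)+pv'=\alpha+pv>-1$, and since
\[
I^{v'}_{u'}(D^t_s f)(x)=(1-|x|^2)^{v-t}D^{v-t}_{s+t}D^t_s f(x)=(1-|x|^2)^{v-t}D^v_s f(x),
\]
while $I^v_s f(x)=(1-|x|^2)^v D^v_s f(x)$, we get $|I^{v'}_{u'}(D^t_s f)(x)|=(1-|x|^2)^{-t}|I^v_s f(x)|$, and multiplying by the weight $(1-|x|^2)^{\alpha+pt}$ inside the $L^p$ integral exactly cancels the discrepancy, yielding $\|D^t_s f\|_{b^p_{\alpha+pt}}\sim\|I^v_s f\|_{L^p_\alpha}\sim\|f\|_{b^p_\alpha}$ (the last equivalence because $(s,v)$ is an admissible pair for $b^p_\alpha$, using that $D^t_s$ at parameter $s$ with $t=v$ gives a legitimate realization, i.e.\ $\alpha+pv>-1$). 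This shows $D^t_s$ maps $b^p_\alpha$ boundedly into $b^p_{\alpha+pt}$. For surjectivity and for the inverse bound, I invoke the two-sided inverse $D^{-t}_{s+t}D^t_s=D^t_s D^{-t}_{s+t}=I$ recorded in the excerpt: by the part already proved (applied with $(\alpha,s,t)$ replaced by $(\alpha+pt,\,s+t,\,-t)$) the map $D^{-t}_{s+t}:b^p_{\alpha+pt}\to b^p_\alpha$ is bounded, and it is the two-sided inverse of $D^t_s$. Hence $D^t_s$ is a bounded bijection with bounded inverse, i.e.\ an isomorphism.

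Part (ii) is proved in exactly the same way, replacing Definition \ref{definition of the h B-B space} by Definition \ref{definition of the h B space}, the condition $\alpha+pv>-1$ by $\alpha+v>0$, and the $L^p_\alpha$-norm by the $\mathcal{L}^\infty_\alpha$-norm; the weight bookkeeping is identical: $I^{v'}_{u'}(D^t_s f)(x)=(1-|x|^2)^{-t}I^v_s f(x)$ with $v'=v-t$, and the supremum weight $(1-|x|^2)^{(\alpha+t)+v'}=(1-|x|^2)^{\alpha+v}$ matches the one realizing $\|f\|_{b^\infty_\alpha}$, while the shifted condition $(\alpha+t)+v'>0$ is the same as $\alpha+v>0$. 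Again the two-sided inverse $D^{-t}_{s+t}$ furnishes surjectivity and the reverse estimate.

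The only genuine subtlety — and the step I would be most careful about — is the parameter-independence bookkeeping: one must make sure that for every step the pair of parameters used to measure a given space actually satisfies the admissibility inequality ((\ref{alpha+pt}) or (\ref{alpha+t})), so that Definitions \ref{definition of the h B-B space} and \ref{definition of the h B space} apply and the resulting (quasi)norms are equivalent. Once one fixes the single convenient choice $v'=v-t$, $u'=s+t$ and checks that $(\alpha+pt)+pv'=\alpha+pv$ (resp.\ $(\alpha+t)+v'=\alpha+v$), everything else is the additive law $D^z_{s+t}D^t_s=D^{z+t}_s$, the invertibility $D^{-t}_{s+t}D^t_s=I$, and a one-line weight cancellation inside the integral (or supremum). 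No analytic estimate beyond these algebraic identities is needed, so the proof is short; the risk is purely a matter of not tracking the parameters correctly.
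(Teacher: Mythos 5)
Your argument is correct, and every ingredient you use is available in the paper: the additive law $D^{z}_{s+t}D^{t}_{s}=D^{z+t}_{s}$, the two-sided invertibility $D^{-t}_{s+t}D^{t}_{s}=D^{t}_{s}D^{-t}_{s+t}=I$, and the fact that Definitions \ref{definition of the h B-B space} and \ref{definition of the h B space} are independent of the admissible pair $(s,t)$ with all resulting (quasi)norms equivalent. Your parameter bookkeeping checks out: with $v'=v-t$, $u'=s+t$ one has $(\alpha+pt)+pv'=\alpha+pv>-1$ (resp.\ $(\alpha+t)+v'=\alpha+v>0$), $I^{v'}_{u'}(D^{t}_{s}f)=(1-|x|^2)^{-t}I^{v}_{s}f$, and the weight shift from $\alpha$ to $\alpha+pt$ (resp.\ $\alpha+t$) cancels the factor $(1-|x|^2)^{-pt}$ (resp.\ $(1-|x|^2)^{-t}$) exactly, so the two (quasi)norms agree up to the normalizing constants $V_\alpha/V_{\alpha+pt}$; the inverse bound then follows from the already-proved direction applied to $D^{-t}_{s+t}$. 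The one point of comparison to flag is that the paper does not prove this lemma at all: it is quoted with citations to \cite[Corollary 9.2]{GKU2}, \cite[Proposition 4.7]{DOG} and \cite[Proposition 4.6]{DU1}, and those sources argue essentially as you do, by reducing the isomorphism to the parameter-independence of the norms plus the composition law. So your proof is not a different route from the literature, but it is self-contained modulo the parameter-independence statement, which is itself the nontrivial analytic input (the paper records it as ``well-known''); it would be worth saying explicitly that you are taking that equivalence of norms as given rather than proving it.
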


For a proof of part (i) of the above lemma see \cite[Corollary 9.2]{GKU2} when $1\leq p<\infty$ and \cite[Proposition 4.7]{DOG} when $0<p<1$. For part (ii) see \cite[Proposition 4.6]{DU1}.

The lemma below shows that  if $x$ stays close to $0$, then $R_\alpha(x,y)$ is uniformly away from $0$ for every $y\in \mathbb{B}$.
\begin{lemma}\label{Lemma-Stay away from 0}(\cite[Lemma 3.2]{DU1}).
Let $\alpha\in \mathbb{R}$. There exists $\epsilon>0$ such that for all $|x|<\epsilon$ and for all $y\in \mathbb{B}$, we have $R_\alpha(x,y) \geq 1/2$.
\end{lemma}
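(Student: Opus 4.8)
The statement to prove is Lemma~\ref{Lemma-Stay away from 0}: for fixed $\alpha\in\mathbb{R}$, there is $\epsilon>0$ with $R_\alpha(x,y)\ge 1/2$ whenever $|x|<\epsilon$ and $y\in\mathbb{B}$.

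Here's my plan.

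\medskip

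The plan is to exploit the normalization $R_\alpha(0,y)=1$ from \eqref{Rq(x,0)} together with a quantitative continuity estimate of $R_\alpha(x,y)$ in $x$ that is uniform in $y\in\mathbb{B}$. Concretely, I would write, using the homogeneous expansion \eqref{Rq - Series expansion},
\[
R_\alpha(x,y)-R_\alpha(0,y)=\sum_{k=1}^\infty \gamma_k(\alpha) Z_k(x,y),
\]
since only the $k=0$ term survives at $x=0$ (recall $\gamma_0(\alpha)=1$ and $Z_0\equiv 1$). The goal is then to bound $\sum_{k=1}^\infty \gamma_k(\alpha)|Z_k(x,y)|$ by something that tends to $0$ as $|x|\to 0$, uniformly over $y\in\mathbb{B}$, so that one can make it $\le 1/2$ and conclude $R_\alpha(x,y)\ge 1-1/2=1/2$.

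\medskip

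The key ingredient is the standard pointwise bound on zonal harmonics (see \cite[Chapter 5]{ABR}): $Z_k$ is a homogeneous polynomial of degree $k$ in each variable, and $|Z_k(x,y)|\le Z_k(x',x')^{1/2}Z_k(y',y')^{1/2}$ for $x=|x|x'$, $y=|y|y'$ with $x',y'\in\mathbb{S}$, together with the growth estimate $Z_k(\zeta,\zeta)\sim k^{n-2}$ for $\zeta\in\mathbb{S}$ (up to a dimensional constant). By homogeneity this gives $|Z_k(x,y)|\lesssim k^{n-2}|x|^k|y|^k\le k^{n-2}|x|^k$ since $|y|<1$. Combined with the asymptotics $\gamma_k(\alpha)\sim k^{1+\alpha}$ from \eqref{gamma-k-asymptotic}, we get
\[
\sum_{k=1}^\infty \gamma_k(\alpha)|Z_k(x,y)| \lesssim \sum_{k=1}^\infty k^{n-1+\alpha}|x|^k,
\]
and the right-hand side is a convergent power series in $|x|<1$ (ratio test: $k^{n-1+\alpha}$ is subexponential), which vanishes as $|x|\to 0$ since every term does and the tail is dominated near $0$. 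Hence there exists $\epsilon>0$ such that this sum is $\le 1/2$ for all $|x|<\epsilon$ and all $y\in\mathbb{B}$, giving the claim. One should double-check the $n=2$ case where $Z_k(\zeta,\zeta)$ behaves like a constant rather than growing, but that only makes the bound better.

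\medskip

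I do not expect a serious obstacle here; the main point requiring care is making the estimate \emph{uniform in $y$}, which is handled cleanly by the Cauchy–Schwarz-type bound on $Z_k$ that separates the $x$ and $y$ dependence and then uses $|y|\le 1$. An alternative, perhaps even shorter, route avoids the zonal-harmonic estimates altogether: the function $(x,y)\mapsto R_\alpha(x,y)$ extends continuously to $\overline{\mathbb{B}}\times\overline{\mathbb{B}}$ (it is real-analytic on a neighborhood of the diagonal-free region and, more to the point, the series converges uniformly on $\{|x|\le r\}\times\overline{\mathbb{B}}$ for each $r<1$ by the bound above), so $R_\alpha$ is continuous on the compact set $\{0\}\times\overline{\mathbb{B}}$ where it is identically $1$; continuity then yields a neighborhood $\{|x|<\epsilon\}\times\overline{\mathbb{B}}$ on which $R_\alpha\ge 1/2$. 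I would present the power-series estimate as the primary argument since it is self-contained and quantitative, and remark that it also establishes the uniform convergence needed for the continuity statement.
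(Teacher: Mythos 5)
Your proposal is correct, and since the paper offers no proof of its own here (it simply cites \cite[Lemma 3.2]{DU1}), your argument supplies exactly the standard one from that reference: expand $R_\alpha(x,y)-1=\sum_{k\ge 1}\gamma_k(\alpha)Z_k(x,y)$, bound $|Z_k(x,y)|\lesssim k^{n-2}|x|^k$ uniformly in $y\in\mathbb{B}$ via the zonal-harmonic estimates of \cite[Chapter 5]{ABR}, and use $\gamma_k(\alpha)\sim k^{1+\alpha}$ from (\ref{gamma-k-asymptotic}) to obtain a convergent majorant that tends to $0$ as $|x|\to 0$. No gaps; the uniformity in $y$ is handled correctly by separating the variables in the $Z_k$ bound.
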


\section{Proof of Theorem \ref{Theorem-Boundedness of T 4}}\label{Proof1}

In this section, we prove Theorem \ref{Theorem-Boundedness of T 4}. Note that we call the second and third inequality in each of the four parts of Theorem \ref{Theorem-Boundedness of T 4}  the first and second necessary condition, respectively.

Before the proof, we formulate the behavior of the operators $T_{bc}$ in many important situations which will be used in this
and next section. These are from Section 3 of \cite{DOG1} and adapted from similar results in Section 4 of  \cite{KU1}.  We begin by inserting some obvious inequalities that we  use many times. If $a_{1}<a_{2}$, $u>0$, and $v \in \mathbb{R}$, then for $0\leq t<1$,
\begin{equation}\label{obvious-inequalities}
  (1-t^2)^{a_{1}}\leq (1-t^2)^{a_{2}} \qquad \text{and} \qquad (1-t^2)^{u} \big(1+\log (1-t^2)^{-1}\big)^{-v}\lesssim 1.
\end{equation}

The second inequality above leads to the following estimate.
\begin{lemma}\label{an estimate from calculus}(\cite[Lemma 3.1]{DOG1}).
For $u,v \in \mathbb{R}$,
\begin{equation*}
\int_{0}^{1} (1-t^{2})^{u}\big(1+\log \frac{1}{1-t^{2}}\big)^{-v} \, dt<\infty
\end{equation*}
if $u>-1$ or $u=-1$ and $v>1$, and the integral diverges otherwise.
\end{lemma}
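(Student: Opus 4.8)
The plan is to localize the analysis near the endpoint $t=1$ and then, by an exponential change of variables, reduce the integral to a one-dimensional integral whose convergence behaviour is classical. Set $g(t):=(1-t^2)^u\bigl(1+\log\tfrac{1}{1-t^2}\bigr)^{-v}$; this is positive and continuous on $[0,1)$ because $1+\log\tfrac{1}{1-t^2}\ge 1$ there, so $\int_0^{1/2}g<\infty$ and only the behaviour of $g$ as $t\to1^-$ matters. On $[1/2,1)$ one has $1-t\le 1-t^2=(1-t)(1+t)\le 2(1-t)$ and, correspondingly, $1+\log\tfrac{1}{1-t}\le 1+\log\tfrac{1}{1-t^2}\le (1+\log 2)\bigl(1+\log\tfrac{1}{1-t}\bigr)$. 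Raising these chains of inequalities to the powers $u$ and $-v$ respectively (the direction is immaterial, since it only affects multiplicative constants depending on $u,v$ alone) gives $g(t)\sim (1-t)^u\bigl(1+\log\tfrac{1}{1-t}\bigr)^{-v}$ on $[1/2,1)$. Hence it suffices to decide when $\int_{1/2}^1 (1-t)^u\bigl(1+\log\tfrac{1}{1-t}\bigr)^{-v}\,dt$ converges.

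Next I substitute $t=1-e^{-x}$, so that $1-t=e^{-x}$, $\log\tfrac{1}{1-t}=x$, $dt=e^{-x}\,dx$, and $t\in[1/2,1)$ corresponds to $x\in[\log 2,\infty)$. The integral becomes
\[
\int_{\log 2}^{\infty} e^{-ux}(1+x)^{-v}\,e^{-x}\,dx=\int_{\log 2}^{\infty} e^{-(u+1)x}(1+x)^{-v}\,dx .
\]
For this last integral the trichotomy is the familiar one for $\int^{\infty}e^{-ax}x^{-v}\,dx$: if $u+1>0$ the exponential decay dominates the factor $(1+x)^{-v}$ for every $v$, so the integral is finite; if $u+1<0$ the integrand tends to $+\infty$ (exponential growth beats any power of $1+x$), so the integral diverges; and if $u+1=0$ it reduces to $\int_{\log 2}^{\infty}(1+x)^{-v}\,dx$, which is finite exactly when $v>1$. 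Since $u+1>0\iff u>-1$ and $u+1=0\iff u=-1$, this is precisely the asserted criterion, and the reduction in the first paragraph guarantees it transfers back to the original integral.

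There is no serious obstacle here; the only point deserving a little care is the localization step, where one must check that the relations near $t=1$ are genuine two-sided estimates valid on an entire one-sided neighbourhood of $1$, with comparison constants independent of $t$ (and of the sign of $v$), so that the reduction preserves divergence as well as convergence rather than just one of the two implications.
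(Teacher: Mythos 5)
Your argument is correct and complete. Note that the paper itself offers no proof of this lemma: it simply cites \cite[Lemma 3.1]{DOG1}, so there is no internal argument to compare against; your write-up supplies a self-contained verification. The structure is sound: the integrand is continuous on $[0,1)$, so only the endpoint $t=1$ matters; the two-sided bounds $1-t\le 1-t^2\le 2(1-t)$ and the comparability of $1+\log\frac{1}{1-t}$ with $1+\log\frac{1}{1-t^2}$ (with absolute constants) reduce the problem to $\int_{1/2}^1(1-t)^u\bigl(1+\log\frac{1}{1-t}\bigr)^{-v}dt$; and the substitution $t=1-e^{-x}$ turns this into $\int_{\log 2}^\infty e^{-(u+1)x}(1+x)^{-v}dx$, whose trichotomy in $u+1$ and $v$ is exactly the claimed criterion, for divergence as well as convergence since all integrands are nonnegative. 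One small slip: since $1-t^2\ge 1-t$ on $[0,1)$, you actually have $1+\log\frac{1}{1-t^2}\le 1+\log\frac{1}{1-t}\le (1+\log 2)\bigl(1+\log\frac{1}{1-t^2}\bigr)$, i.e.\ the first inequality in your chain points the wrong way; this does not affect anything, because the two-sided comparability you use is correct and, as you observe, only the multiplicative constants (depending on $u,v$) matter. An alternative route, closer in spirit to the surrounding text of the paper (which remarks that the second inequality in its display (2.2) ``leads to'' the lemma), is to absorb the logarithm into a small power of $1-t^2$ when $u>-1$ and to substitute $w=\log\frac{1}{1-t^2}$ when $u=-1$; your exponential substitution handles all cases uniformly, which is arguably cleaner.
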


We will use the functions
\begin{equation*}
f_{uv}(x)= (1-|x|^{2})^{u}\big(1+\log \frac{1}{1-|x|^{2}}\big)^{-v} \quad (u,v \in \mathbb{R})
\end{equation*}
as test functions to obtain some of the necessary conditions of our theorems from the action of $ T_{bc}$ on them.

\begin{lemma}\label{when fuv in Lpq}(\cite[Lemma 3.2]{DOG1}).
For $1\leq p<\infty$, we have $f_{uv}\in L^p_\alpha$ if and only if $\alpha+pu>-1$, or $\alpha+pu=-1$ and $pv>1$. For $p=\infty$, we have $f_{uv}\in \mathcal{L}^{\infty}_{\alpha}$ if and only if $\alpha+u>0$, or $u=-\alpha$ and $v\geq 0$.
\end{lemma}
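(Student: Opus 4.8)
The plan is to prove Lemma~\ref{when fuv in Lpq} by a direct reduction to polar coordinates and then to the one-dimensional integral estimate of Lemma~\ref{an estimate from calculus}. The two statements (the $L^p_\alpha$ case for $1 \le p < \infty$ and the $\mathcal{L}^\infty_\alpha$ case) are handled separately, but both are essentially bookkeeping with the weight exponents.

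\emph{The $1 \le p < \infty$ case.} First I would write out $\|f_{uv}\|_{L^p_\alpha}^p$ explicitly. Since $f_{uv}$ depends only on $|x|$, the polar coordinates formula collapses the sphere integral (using $\sigma(\mathbb{S}) = 1$) and gives
\begin{equation*}
\|f_{uv}\|_{L^p_\alpha}^p = \frac{n}{V_\alpha} \int_0^1 \epsilon^{n-1} (1-\epsilon^2)^{\alpha + pu} \Bigl(1 + \log\frac{1}{1-\epsilon^2}\Bigr)^{-pv} \, d\epsilon.
\end{equation*}
The factor $\epsilon^{n-1}$ is bounded above and below by positive constants on any interval $[\delta, 1)$, and near $\epsilon = 0$ the integrand is continuous and hence contributes a finite amount regardless of the parameters; so convergence of the whole integral is equivalent to convergence of $\int_0^1 (1-\epsilon^2)^{\alpha+pu}(1 + \log\frac{1}{1-\epsilon^2})^{-pv}\,d\epsilon$. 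Applying Lemma~\ref{an estimate from calculus} with $u$ replaced by $\alpha + pu$ and $v$ replaced by $pv$ yields exactly the stated dichotomy: finiteness holds iff $\alpha + pu > -1$, or $\alpha + pu = -1$ and $pv > 1$. One should also note that $f_{uv}$ is measurable and positive, so membership in $L^p_\alpha$ is genuinely equivalent to finiteness of this integral (no cancellation issues).

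\emph{The $p = \infty$ case.} Here I would compute $\|f_{uv}\|_{\mathcal{L}^\infty_\alpha} = \sup_{x \in \mathbb{B}} (1-|x|^2)^{\alpha} f_{uv}(x) = \sup_{0 \le r < 1} (1-r^2)^{\alpha + u}(1 + \log\frac{1}{1-r^2})^{-v}$, writing $r = |x|$. Substituting $s = 1 - r^2 \in (0,1]$, this is $\sup_{0 < s \le 1} s^{\alpha+u}(1 + \log\frac1s)^{-v}$. The behavior as $s \to 0^+$ governs finiteness: if $\alpha + u > 0$ the factor $s^{\alpha+u} \to 0$ dominates any power of the logarithm, so the sup is finite (by the second inequality in \eqref{obvious-inequalities}); if $\alpha + u < 0$ the sup is clearly $+\infty$; and if $\alpha + u = 0$, i.e. $u = -\alpha$, the expression reduces to $(1 + \log\frac1s)^{-v}$, which stays bounded as $s \to 0$ precisely when $v \ge 0$ and blows up when $v < 0$. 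This gives the stated condition. I would also check the endpoint $s = 1$ contributes nothing problematic since the function is continuous there.

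I do not anticipate a serious obstacle here; the only mild subtlety is being careful that the $\epsilon^{n-1}$ factor and the behavior near the origin are genuinely harmless (which they are, being bounded), so that the problem reduces cleanly to Lemma~\ref{an estimate from calculus} and the elementary estimates in \eqref{obvious-inequalities}. The proof is short and the value of the lemma is entirely in its later use as a supply of test functions.
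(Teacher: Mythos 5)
Your argument is correct: for $1\le p<\infty$ the polar-coordinates reduction to Lemma~\ref{an estimate from calculus} (with $u\mapsto\alpha+pu$, $v\mapsto pv$) is exactly right, and the $p=\infty$ case is the straightforward supremum computation, including the correct observation that for $\alpha+u<0$ no power of the logarithm can compensate the power blow-up. The paper itself offers no proof of this lemma --- it is quoted from \cite{DOG1} --- and your computation is the natural one that the cited reference carries out, so there is nothing to add.
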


\begin{lemma}\label{when Tfuv is finite} (\cite[Lemma 3.3]{DOG1}).
If $b+u>-1$ or if $b+u=-1$ and $v>1$, then $T_{bc}f_{uv}$ is a finite positive constant. Otherwise, $T_{bc}f_{uv}(x)=\infty$ for $|x|\leq \epsilon$, where $\epsilon$ is as in Lemma \ref{Lemma-Stay away from 0}.
\end{lemma}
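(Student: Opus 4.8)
The plan is to exploit that $f_{uv}$ is a radial function together with the fact that $R_c(x,\cdot)$ is harmonic on $\overline{\mathbb{B}}$, so that a spherical-mean computation collapses $T_{bc}f_{uv}$ to a one-dimensional integral to which Lemma \ref{an estimate from calculus} applies directly. First I would write
\[
T_{bc}f_{uv}(x)=\int_{\mathbb{B}} R_c(x,y)\,(1-|y|^2)^{b+u}\Big(1+\log\frac{1}{1-|y|^2}\Big)^{-v}\,d\nu(y),
\]
pass to polar coordinates $y=r\zeta$ via the formula in Section \ref{section-Preliminaries}, and pull the radial factors out of the inner spherical integral. Since $y\mapsto R_c(x,y)$ is harmonic on $\overline{\mathbb{B}}$ and $R_c(x,0)=1$ by \eqref{Rq(x,0)}, the mean value property gives $\int_{\mathbb{S}}R_c(x,r\zeta)\,d\sigma(\zeta)=1$ for every $r\in[0,1)$, so formally
\[
T_{bc}f_{uv}(x)=n\int_0^1 r^{n-1}(1-r^2)^{b+u}\Big(1+\log\frac{1}{1-r^2}\Big)^{-v}\,dr,
\]
which does not depend on $x$. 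The factor $r^{n-1}$ is bounded above by $1$ and bounded below near $r=1$, hence irrelevant for the convergence of this integral, and it is strictly positive on $(0,1)$, hence irrelevant for positivity.

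For the finiteness regime ($b+u>-1$, or $b+u=-1$ and $v>1$): because $R_c(x,\cdot)$ is continuous on the compact set $\overline{\mathbb{B}}$ it is bounded there, so the integrand defining $T_{bc}f_{uv}(x)$ is dominated by $\|R_c(x,\cdot)\|_{L^\infty(\overline{\mathbb{B}})}(1-|y|^2)^{b+u}\big(1+\log\frac{1}{1-|y|^2}\big)^{-v}$, which is $\nu$-integrable by Lemma \ref{an estimate from calculus}. Hence Fubini legitimizes the reduction above, the displayed one-dimensional integral is finite, and by the remark at the end of the previous paragraph it is a finite positive constant.

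For the divergence regime: fix $\epsilon>0$ as in Lemma \ref{Lemma-Stay away from 0}, so that $R_c(x,y)\geq 1/2$ for all $|x|<\epsilon$ and all $y\in\mathbb{B}$. Then for such $x$ the integrand $R_c(x,y)(1-|y|^2)^{b+u}\big(1+\log\frac{1}{1-|y|^2}\big)^{-v}$ is nonnegative, so the integral is well defined in $[0,\infty]$, Tonelli applies, and
\[
T_{bc}f_{uv}(x)\ \geq\ \frac{n}{2}\int_0^1 r^{n-1}(1-r^2)^{b+u}\Big(1+\log\frac{1}{1-r^2}\Big)^{-v}\,dr\ =\ \infty
\]
by the divergence half of Lemma \ref{an estimate from calculus}; since this forces $T_{bc}f_{uv}(x)=\infty$ on the open ball $|x|<\epsilon$, it holds a fortiori on $|x|\le\epsilon$ after relabeling $\epsilon$.

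The \emph{main obstacle}, modest as it is, is the Fubini/Tonelli bookkeeping that makes the spherical-mean reduction rigorous: in the finiteness regime this rests on the (uniform on compact sets of $x$) boundedness of $R_c(x,\cdot)$ on $\overline{\mathbb{B}}$, which is exactly where the harmonicity of the kernel on $\overline{\mathbb{B}}$ is used, while in the divergence regime one must first localize to $|x|<\epsilon$, via Lemma \ref{Lemma-Stay away from 0}, to turn the integrand nonnegative before interchanging the order of integration. Everything else is the elementary one-variable analysis already packaged in Lemma \ref{an estimate from calculus}.
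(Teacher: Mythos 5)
Your proof is correct. The paper itself gives no proof of this lemma (it is quoted from \cite[Lemma 3.3]{DOG1}), but your argument --- collapsing $T_{bc}f_{uv}(x)$ to a radial integral via polar coordinates and the mean value property of $y\mapsto R_c(x,y)$ together with $R_c(x,0)=1$, then applying Lemma \ref{an estimate from calculus}, with boundedness of the kernel on $\overline{\mathbb{B}}$ justifying Fubini in the convergent case and Lemma \ref{Lemma-Stay away from 0} giving the lower bound in the divergent case --- is exactly the standard argument for these kernels, matching the approach of the holomorphic analogue in \cite{KU1} from which Section 3 of \cite{DOG1} is adapted.
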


\begin{proof}[Proof of Theorem \ref{Theorem-Boundedness of T 4}]
Let  $b,c,\alpha,\beta \in \mathbb{R}$, $1\leq p\leq\infty$ and $1\leq q< \infty$. \\ First Necessary Condition. Assume that $T_{bc}$ is bounded from $L^p_\alpha$ to $b^q_\beta$. The proof can be handled in three cases depending on the value of $p$.

 We first show the case $1<p<\infty$. Consider $f_{uv}$ with $u=-(1+\alpha)/p$ and $v=1$ so that $f_{uv}\in L^p_\alpha$ by Lemma \ref{when fuv in Lpq}. Then its clear that $T_{bc}f_{uv}\in b^q_\beta$ and this implies  $T_{bc}f_{uv}(0)\in \mathbb{C}$. We have by (\ref{Rq(x,0)})
\begin{equation*}
T_{bc}f_{uv}(0)=\int_{\mathbb{B}} (1-|x|^2)^{b-(1+\alpha)/p}\left(1+\log \dfrac{1}{(1-|x|)^{2}}\right)^{-1}d\nu(x).
\end{equation*}
Writing the integral in polar coordinates and using Lemma \ref{an estimate from calculus}, we obtain $(1+\alpha)/p<1+b$. Thus we derive the second inequality in parts (i) and (iii).

Next, we show the second case $p=1$. Consider $f_{uv}$ with $u>-(1+\alpha)$ and $v=0$ so that $f_{u0}\in L^1_\alpha$ by Lemma \ref{when fuv in Lpq}. Then $T_{bc}f_{u0}\in b^q_\beta$  and this implies  $T_{bc}f_{u0}(0)\in \mathbb{C}$. We have by (\ref{Rq(x,0)})
\begin{equation*}
T_{bc}f_{u0}(0)=\int_{\mathbb{B}} (1-|x|^2)^{b+u} d\nu(x)
\end{equation*}
 with $u>-(1+\alpha)$. Again writing  the integral in polar coordinates and using Lemma \ref{an estimate from calculus}, we obtain $\alpha\leq b$.
Thus we derive the second inequality in part (ii).

The last case is $p=\infty$. Let now  $f_{uv}$ with $u=-\alpha$ and $v=0$ so that $f_{u0}\in \mathcal{L}^\infty_\alpha$ by Lemma \ref{when fuv in Lpq}. Then $T_{bc}f_{u0}\in b^q_\beta$ and this implies $T_{bc}f_{u0}(0)\in \mathbb{C}$. We have by ( \ref{Rq(x,0)}) again,
\begin{equation*}
T_{bc}f_{u0}(0)=\int_{\mathbb{B}} (1-|x|^2)^{b-\alpha} d\nu(x).
\end{equation*}
 One more time writing  the integral in polar coordinates and using Lemma \ref{an estimate from calculus}, we obtain $b-\alpha>-1$.
Finally, we derive the second inequality in part (iv).

Now we will show the Second Necessary Condition. Having proved Theorems \ref{Theorem-Boundedness of T 1} and \ref{Theorem-Boundedness of T 2}, our job is easy with a technique of composing bounded maps. Firstly, we
present the following lemma which is a crucial component
of this technique and  will allow us to push $D^t_s$ into the integral operator $T_{bc}$.

\begin{lemma}\label{Lemma-Push-Dst into Tcb}
Let $c, b\in \mathbb{R}$ and $f\in L_b^1$. Then $D^t_c T_{bc}f=T_{b,c+t}f$ for every $t \in \mathbb{R}$.
\end{lemma}
\begin{proof}
Writing $T_{bc}f$ explicitly, it is easy to see that the proof can be verified in the same way as in  \cite[Lemma 2.3]{DU1}.  Thus we omit the details.
\end{proof}

We also need the following result of \cite{DOG1}.
\begin{corollary}\label{Remark-T,S with beta bigger than -1}(\cite[Corollary 3.6]{DOG1}).
If $T_{bc}:L^p_\alpha\to L^q_\beta$ is bounded and  $f\in L^p_\alpha$,  then $g=T_{bc}f$ is harmonic on $\mathbb{B}$. If also $q<\infty$, then   $\beta>-1$. Therefore $T_{bc}:L^p_\alpha\to b^q_\beta$  when it is bounded with $\beta>-1$ and $q<\infty$. Moreover, if  $\beta\leq -1$ and $q<\infty$, then  $T_{bc}:L^p_\alpha\to L^q_\beta$ is  not bounded. On the other hand, if $T_{bc}:L^p_\alpha\to L^\infty$ is bounded and  $f\in L^p_\alpha$,  then $g=T_{bc}f\in h^{\infty}$. Finally, If $T_{bc}:L^p_\alpha\to \mathcal{L}^\infty_{\beta}$ is bounded, $f\in L^p_\alpha$, and $\beta>0$  then $g=T_{bc}f\in b^\infty_{\beta}$. Moreover, if  $\beta< 0$, then  $T_{bc}:L^p_\alpha\to \mathcal{L}^\infty_\beta$ is  not bounded.
\end{corollary}

Second Necessary Condition. We assume that $T_{bc}$ is bounded and the first ne\-ces\-sary condition holds, and then we apply Theorem \ref{Theorem-Boundedness of T 1}. Let $f \in L^p_\alpha$. We first show also $f \in L^1_b$  in order to able to use Lemma \ref{Lemma-Push-Dst into Tcb}. In the  first  case $1< p<\infty$, by the H\"{o}lder inequality, we have
\begin{align*}
\|f\|_{L^1_b}&=\frac{V_{\alpha}}{V_{b}}\int_{\mathbb{B}} |f(x)|(1-|x|^2)^{b-\alpha}d\nu_{\alpha}(x)\\
&\lesssim \|f\|_{L_{\alpha}^{p}}\left(\int_{\mathbb{B}} (1-|x|^2)^{(b-\alpha)p'}d\nu_{\alpha}(x)\right)^{1/p'} <\infty,
\end{align*}
where the last inequality possible since $(b-\alpha)p'+\alpha=\frac{p(1+b)-(\alpha+p)}{p-1}> \frac{1+\alpha-(\alpha+p)}{p-1}=-1$ by the already obtained first necessary condition. In the  second  case $p=1$, we have $\alpha \leq b$ by the first necessary condition and thus $f \in L^1_b$ by (\ref{obvious-inequalities}). In the third case $p=\infty$, $f \in \mathcal{L}^\infty_\alpha$ and the first necessary condition gives $-1<b-\alpha$; thus
\begin{align*}
\|f\|_{L^1_b}&=\frac{1}{V_{b}}\int_{\mathbb{B}} (1-|x|^2)^{\alpha}|f(x)|(1-|x|^2)^{b-\alpha}d\nu(x)\\
&\lesssim \|f\|_{\mathcal{L}_{\alpha}^{\infty}}\left(\int_{\mathbb{B}} (1-|x|^2)^{b-\alpha}d\nu(x)\right) <\infty
\end{align*}
and $f\in L^1_b$ again.

Now consider the composition of bounded maps
 \begin{equation*}
 L^p_\alpha  \stackrel{\mathrm{T_{bc}}}{\xrightarrow{\hspace*{1cm}}}  b^q_\beta
 \stackrel{\mathrm{D_{c}^{-\beta/q}}}{\xrightarrow{\hspace*{1cm}}} b^q,
\end{equation*}
where Lemma \ref{Apply-Dst} (i) is put the work. With $L^p_\alpha \subset L^1_b$ at hand from the above arguments, this composition equals $T_{b,c-\beta/q}$ by Lemma \ref{Lemma-Push-Dst into Tcb}. We conclude that $T_{b,c-\beta/q}: L^p_\alpha \to L^q$ is bounded by Corollary \ref{Remark-T,S with beta bigger than -1}. Hence we obtain the third inequalities in all  parts  of the theorem with $c-\beta/q$ in place of $c$ and $0$ in place of $\beta$. But these are precisely the same
third inequalities in all  parts of the theorem.

Sufficiency. We assume that the three inequalities in all  parts of the theorem hold. Notice that the third inequality is equivalent to that with $c$ replaced by $c-\beta/q$ and   $\beta$ by $0$ such as in the above paragraph. Now Theorem \ref{Theorem-Boundedness of T 1} and Corollary \ref{Remark-T,S with beta bigger than -1} imply that $T_{b,c-\beta/q}: L^p_\alpha \to L^{q}\cap h(\mathbb{B})=b^q$ is bounded. Then the composition of maps
 \begin{equation*}
 L^p_\alpha  \stackrel{\mathrm{T_{b,c-\beta/q}}}{\xrightarrow{\hspace*{1cm}}}  b^q
 \stackrel{\mathrm{D_{c-\beta/q}^{\beta/q}}}{\xrightarrow{\hspace*{1cm}}} b^q_{\beta}
\end{equation*}
 is also bounded by Lemma \ref{Apply-Dst} (i). The second inequality in all parts of the theorem such as  in the proof of second necessity condition mentioned already yields $L^p_\alpha \subset L^1_b$. Thus by  Lemma \ref{Lemma-Push-Dst into Tcb}, this composition equals $T_{bc}: L^p_\alpha \to b^q_{\beta}$.
\end{proof}

\section{Proofs of Theorems \ref{Theorem-Boundedness of T 5} and \ref{Theorem-Boundedness of T 6} }\label{Proof2}

In this section, we prove Theorems \ref{Theorem-Boundedness of T 5} and \ref{Theorem-Boundedness of T 6}. Once more we  call the second and third inequality in each of the three parts of Theorems \ref{Theorem-Boundedness of T 5} and \ref{Theorem-Boundedness of T 6} the first and second necessary condition, respectively.

\begin{proof}[Proof of Theorem \ref{Theorem-Boundedness of T 5}]
Let $b,c,\alpha,\beta \in \mathbb{R}$ and $1\leq p \leq q=\infty$.
 First Necessary Condition. Assume that $T_{bc}$ is bounded from $L^p_\alpha$ to $b^\infty_\beta$. We  imitate the  proof of Theorem \ref{Theorem-Boundedness of T 4}. Thus we separate the proof in same three cases depending on the value of $p$ and use the same test functions $f_{uv}$ for each cases obtaining $T_{bc}f_{uv}\in b^{\infty}_{\beta}$. Then  $T_{bc}f_{uv}(0)\in \mathbb{C}$. Thus the second inequality in all parts of the theorem can be verified by repeating the first part of the above proof.

Second Necessary Condition. We assume that $T_{bc}$ is bounded and the first ne\-ces\-sary condition holds, and then we use Theorem \ref{Theorem-Boundedness of T 2}. Let $f \in L^p_\alpha$. With making small modifications in the second necessary part of above proof one can easily verify  $f \in L^1_b$ again.

Now consider the composition of bounded maps
 \begin{equation*}
 L^p_\alpha  \stackrel{\mathrm{T}_{bc}}{\xrightarrow{\hspace*{1cm}}}  b^\infty_\beta
 \stackrel{\mathrm{D_{c}^{-\beta+1}}}{\xrightarrow{\hspace*{1cm}}} b^\infty_{1},
\end{equation*}
where this time Lemma \ref{Apply-Dst} (ii) is put the work. With $L^p_\alpha \subset L^1_b$ at hand, the composition equals $T_{b,c-\beta+1}$ by Lemma \ref{Lemma-Push-Dst into Tcb}. We conclude that $T_{b,c-\beta+1}: L^p_\alpha \to \mathcal{L}^\infty_{1}$ is bounded by Corollary \ref{Remark-T,S with beta bigger than -1}. Hence we obtain the third inequalities in all parts of the theorem with $c-\beta+1$ in place of $c$ and $1$ in place of $\beta$. But these are precisely the same third inequalities in all parts of the theorem.

Sufficiency. We assume that the three inequalities in all parts of the theorem
hold. Notice that the third inequality is equivalent to that  with $c$ replaced by $c-\beta+1$ and   $\beta$ by $1$ such as in the above paragraph. Now Theorem \ref{Theorem-Boundedness of T 2} and Corollary \ref{Remark-T,S with beta bigger than -1} imply that $T_{b,c-\beta+1}: L^p_\alpha \to \mathcal{L}^\infty_{1}\cap h(\mathbb{B})=b^\infty_{1}$ is bounded. Then the composition of maps
 \begin{equation*}
 L^p_\alpha  \stackrel{\mathrm{T_{b,c-\beta+1}}}{\xrightarrow{\hspace*{1cm}}}   b^\infty_{1}
 \stackrel{\mathrm{D_{c-\beta+1}^{\beta-1}}}{\xrightarrow{\hspace*{1cm}}} b^\infty_{\beta}
\end{equation*}
 is also bounded by Lemma \ref{Apply-Dst} (ii). The second inequality in all parts of the theorem such as in
the proof of second necessity condition mentioned already  yields $L^p_\alpha \subset L^1_b$. Thus by  Lemma \ref{Lemma-Push-Dst into Tcb}, this composition equals $T_{bc}: L^p_\alpha \to b^\infty_{\beta}$ is bounded.
\end{proof}
\begin{proof}[Proof of Theorem \ref{Theorem-Boundedness of T 6}]

Assume that $T_{bc}:L^p_\alpha \to h^\infty$ is bounded. Then since   $h^\infty\subset L^{\infty}$ with the same norms,   $T_{bc}:L^p_\alpha \to L^\infty$  is also bounded. Therefore Theorem \ref{Theorem-Boundedness of T 2} implies the two necessary conditions in all three parts of the theorem hold.

Conversely, if both first and second necessary condition in  all three parts of the theorem hold, then  $T_{bc}:L^p_\alpha \to L^\infty$ is bounded by Theorem
\ref{Theorem-Boundedness of T 2}. But  Corollary \ref{Remark-T,S with beta bigger than -1}
shows that  the range of $T_{bc}$ lies in  $h^{\infty}$ rendering $T_{bc}:L^p_\alpha \to h^\infty$ is bounded.
\end{proof}

Finally, a question runs through one's  mind whether or not our results  on weighted spaces can be obtained from those on unweighted spaces with $\alpha=\beta=0$. It turns out that they can and we now explain how.  We assume that Theorems \ref{Theorem-Boundedness of T 4} and \ref{Theorem-Boundedness of T 5} are proved in the case $\alpha=\beta=0$ and we obtain them in the ge\-ne\-ral case with nonzero $\alpha,\beta$.  We go into details only for part (i) of Theorem \ref{Theorem-Boundedness of T 4}. We need some results from the previous sections whose proofs are independent of the proofs of our main results. Let $M_{v}$ denotes the operator of multiplication by $(1-|x|^{2})^{v}$. Obviously, $M_{-\alpha/p}: L^p \to L^p_\alpha $ is an  isomorphism.
Now, we will consider the sequence of the bounded maps
 \begin{equation*}
L^p  \stackrel{\mathrm{M_{-\alpha/p}}}{\xrightarrow{\hspace*{1cm}}} L^p_\alpha \stackrel{\mathrm{T_{bc}}}{\xrightarrow{\hspace*{1cm}}}  b^q_{\beta}
 \stackrel{\mathrm{D_{c}^{-\beta/q}}}{\xrightarrow{\hspace*{1cm}}} b^q,
\end{equation*}
where the last map is also an isomorphism by Lemma \ref{Apply-Dst} (i). The composition of these maps is $T_{b-\alpha/p,c-\beta/q}: L^p \to b^q $ by Lemma \ref{Lemma-Push-Dst into Tcb}, and its bounded if and only if the second map $T_{bc}: L^p_{\alpha} \to b^q_{\beta} $ is bounded. By assumption, the composition is bounded if and only if
\begin{equation*}
1<p(b+1)-\alpha \qquad \text{and} \qquad c-\frac{\beta}{q}\leq b-\frac{\alpha}{p}+n(\frac{1}{q}-\frac{1}{p}),
\end{equation*}
which are nothing but the inequalities in part (i) of Theorem \ref{Theorem-Boundedness of T 4}.

However, our proofs are not simplified significantly with  $\alpha=\beta=0$.
The classifications in Theorems \ref{Theorem-Boundedness of T 4}--\ref{Theorem-Boundedness of T 2} are according to $p, q$ and there seems to be no simple way of reducing them to fewer parts, because the inequalities in all seven parts can
change between $<$ and $\leq$ without any apparent reason with $p, q$.

\bibliographystyle{amsalpha}

\end{document}